\newtheorem{theorem}{Theorem}[section]
\newtheorem{lemma}{Lemma}[section]
\newtheorem{corollary}{Corollary}[section]
\theoremstyle{definition}
\def\cal{\mathcal}
\begin{document}
\Large

\title[The sharp corner formation in 2d Euler dynamics of patches]
{The sharp corner formation in 2d Euler dynamics of patches: infinite double
exponential rate of  merging}

\author{Sergey A. Denisov}

\email{denissov@math.wisc.edu}

\thanks{{\it Keywords: Contour dynamics, corner formation, self-similar behavior,
double exponential rate of merging} \\
\indent{\it 2000 AMS Subject classification:} primary 76B47,
secondary 76B03}

\address{University of Wisconsin-Madison,
Mathematics Department, 480 Lincoln Dr. Madison, WI 53706-1388, USA}

\maketitle

\maketitle
\begin{abstract}
For the 2d Euler dynamics of patches, we investigate the convergence
to the singular stationary solution in the presence of a regular
strain. It is proved that the rate of merging can be double
exponential infinitely in time  and the estimates we obtain are sharp.
\end{abstract} \vspace{1cm}

\section{Introduction and statement of the results}

The two-dimensional Euler equation on the plane can be written in
the vorticity form as follows
\begin{equation}\label{euler-g}
\dot \theta=\nabla \theta \cdot u, \,\, u=\pi \nabla^\perp\Delta^{-1}\theta,\,\, \theta(z,0)=\theta_0,\,z=(x,y)\in
 \mathbb{R}^2, \,\, \nabla^\perp=(-\partial_y,\partial_x)
\end{equation}
The constant $\pi$ in the formula for $u$ can be dropped by time scaling but we write an equation this way on purpose to simplify the later calculations.
The global in time regularity for the smooth initial data dates back
to the paper by Wolibner \cite{Wo}. The method of Wolibner was based
on the Lagrange formulation of the problem but later other
approaches were used (see, e.g., \cite{bm}). Whatever method is used
to establish the global regularity, one proves the upper bounds for
the norms that measure the regularity of the solution and for the 2d
Euler all we know it that these norms grow in time not faster than
the double exponential. The natural question is: are these estimates
sharp? This problem is of course of the same nature as the problem
of the possible blow up for the 3d Euler equation which  is the
central problem in the mathematical theory of fluids. In spite of
its importance, very little is known even in dimension two. For the
two-dimensional torus, it was proved in \cite{den-expexp} that the
vorticity gradient can indeed grow as double exponential for
arbitrarily long (but fixed) time provided that it is large enough
 at time $t=0$. This result, however, (as well as any other known to the author) did
 not reveal an intrinsic mechanism for the singularity formation but rather only
 indicated that the standard double exponential estimates can not be dramatically improved.

In this paper, we describe the scenario in which the singularity
forms though we consider an easier problem, the problem of the patch
evolution. The method we apply is quite general and can be tried for
other evolution equations: 2d and 3d Euler equations, surface
quasi-geostrophic equation, etc.

  The problem of patch evolution deals with the case when the initial data $\theta_0$ in (\ref{euler-g}) is
  the characteristic function of some compact set. The initial data $\theta_0\in L^\infty(\mathbb{R}^2)\cap L^1(\mathbb{R}^2)$ gives rise to globally defined unique solution thanks to the theory of Yudovich \cite{yud}.
 In our case, this compact set will be a centrally
symmetric pair of two simply connected domains with  smooth
boundaries and thus the Yudovich theory assures that the solution
will always be the centrally symmetric pair of domains because the
Euler evolution on the plane preserves the central symmetry. The
main question we address here is: how does the geometry of these
domains change in time?
 This problem  attracted a lot of attention in both
physics and mathematical literature in the last several decades and
became a classical one.  In
\cite{chem}, Chemin proved that if the boundary of the patch is
sufficiently regular then it will retain the same regularity
forever; another proof of that fact was given later by Bertozzi and
Constantin \cite{bc}. We recommend the wonderful books
\cite{bm,chemin2} for introduction to the subject and for simplified
proofs.

Another model which became quite fashionable lately is the so-called surface
quasi-geostrophic equation (SQG for the shorthand). It is different from the Euler equation (\ref{euler-g}) only in the definition of the velocity: $u=\nabla^\perp \Delta^{-1/2}\theta$
 (for SQG) vs.  $u=\nabla^\perp \Delta^{-1}\theta$ (for 2d Euler), i.e., the
 kernel used in convolution is more singular. Only the local in time existence of the
 regular solutions is known for the
 smooth initial data and there is a conjecture that the blow up happens
 in finite time.
The patch evolution for the SQG was also extensively studied  and there is a numerical evidence and widely accepted belief that the patch with smooth boundary can develop the singularity in finite time (see,
e.g., \cite{dc1} and related  \cite{dc2, dc3, cf1, cf2,  mancho}).  The idea behind it is based on the expectation that the stronger the singularity of the kernel is, the faster the process of the singularity formation is supposed to be and since this process should be nonlinear in nature the blow up can supposedly happen in finite time. That mechanism, however, has never been justified or even explained.

 In this paper,
we present the scenario in which the singularity of the kernel plays the major role in the speed of the blow up formation however we have the proof only for the case of 2d Euler.
As we mentioned already, the patch dynamics for 2d Euler can not go wild in finite time so we are looking for the situation when the geometry goes singular at $t=+\infty$.
 The latter can be described in various
ways: growth in time of the curvature, perimeter, etc. In this
paper, we focus on one particular geometric characteristic, the
distance between two interacting patches, and show that it can decay
as double exponential infinitely in time. The estimates we obtain are sharp so in some sence our results are optimal.  That, however, comes with the price as we need to impose the strain whose main purpose is to prevent the bulk of the patches from going into chaotic regime. This will be explained later.
\smallskip

To state the main result we need to introduce some notation first. Let
$\Omega^{-}=\{-z, z\in \Omega\}$, i.e., the image of $\Omega$ under
the central symmetry. The boundary of $\Omega$ will be denoted by
$\Gamma$. In $\mathbb{R}^2\sim \mathbb{C}$, we consider the 2d Euler dynamics
given by the initial configuration $\Omega(0)\cup \Omega^-(0)$ where $\Omega(0)$ will be defined later. The
areas of patches, the distance between them, the value of vorticity
-- all these quantities are of order one as $t=0$ . The curve $\Gamma(0)$ is smooth and its curvature is of order one as well.  As time evolves,
the Euler evolution deforms $\Omega(0)\cup \Omega^{-}(0)$ to a new
pair $\Omega(t)\cup \Omega^-(t)$ with $|\Omega(t)|=|\Omega(0)|$
because the flow preserves the area and the central symmetry. These
new patches will be
 separated from each other for all times, i.e. ${\rm dist}(\Omega(t),
 \Omega^-(t))>0 $, but
the question, however, is how small the distance between them can
get? To answer this question we need to account for the following
well-known fact first.  If one considers the model of Euler
evolution of two identical point vortices on the the plane, then the
dynamics is quite simple: the vortices will rotate with constant
angular velocity \cite{bm}.  That suggests that two symmetric
patches will tend to ``rotate" until some chaotic regime will
homogenize them to the state which is hard to control and there is a
numerical evidence that this chaotic regime does occur for many
$\Omega(0)$ \cite{zabusky}. So, to avoid this chaotic behavior of
our contours (and we need to control them for all $t>0$!), one needs
to impose the strain which will factor out the intrinsic rotation
but which will also be regular enough to not significantly influence
the nonlinear mechanism for the singularity formation. Therefore, it
is very natural to switch from the original problem to the following
one
\begin{equation}\label{euler-g1}
\dot \theta=\nabla \theta \cdot ( u+S), \,\, u=\pi \nabla^\perp
\Delta^{-1}\theta, \quad
\theta(z,0)=\chi_{\Omega(0)}+\chi_{\Omega^-(0)}
\end{equation}
where $\theta(z,t)=\chi_{\Omega(t)}+\chi_{\Omega^-(t)}$ and $S(z,t)$
is sufficiently regular incompressible strain. The analytical
perspective (and this is our way to think about the problem) is that
the solution $\theta$ to (\ref{euler-g1}) we will obtain can be
regarded as  an approximate solution to the original problem
(\ref{euler-g}). Now we are ready to state the main result of the
paper.
\begin{theorem}\label{main}
Let $\delta\in (0,1)$. Then, there is a
simply connected domain $\Omega(0)$ with smooth boundary satisfying
 ${\rm dist}(\Omega(0),\Omega^-(0))\sim
1$, and a time-dependent compactly supported
 incompressible odd strain
\[S(z,t)=(P(z,t),Q(z,t))\] such that
\[
{\rm dist}(\Omega(t),0)\sim {\rm
dist}(\Omega(t),\Omega^-(t))\lesssim \exp(-e^{\delta t}), \quad
t\geq 0
\]
where $\Omega(t)\cup \Omega^-(t)$ is the Euler dynamics of
$\Omega(0)\cup \Omega^-(0)$ in the presence of the strain $S(z,t)$,
i.e. the solution to (\ref{euler-g1}). Moreover, for $S(z,t)$ we
have
\begin{equation}
\sup_{t\geq 0, z\neq 0} \frac{|S(z,t)|}{|z|}<\infty \label{lip}
\end{equation}
and
\begin{equation}\label{log-lip}
|S(z_2,t)-S(z_1,t)|\lesssim |z_1-z_2|(1+|\log|z_1-z_2||)
\end{equation}
uniformly in $z_{1(2)}\in \mathbb{C}$ and $t\geq 0$.
\end{theorem}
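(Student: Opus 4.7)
The rate $\exp(-e^{\delta t})$ is precisely the maximal rate of contraction compatible with a log-Lipschitz velocity field, which is the standard modulus of continuity for 2d Euler velocities with bounded vorticity. The plan is to engineer an Euler-plus-strain field in which this worst case is actually realized near the origin. The underlying singular object is a pair of wedges meeting at $0$: a direct Biot--Savart computation shows that for a patch $\Omega_\infty$ with a corner at the origin one has $|u(z)-u(0)|\sim |z|(1+|\log|z||)$ as $z\to 0$, the log factor coming from a $\int dr/r$ divergence in $\nabla u$ at the corner. A particle near the corner then obeys $|\dot z|\lesssim |z|(1+|\log|z||)$, and its distance to the corner decays at most double-exponentially --- and exactly double-exponentially in the realizing configuration.

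I would take the strain to be of the form $S(z,t)=A(t)\,z\,\psi(z)$, where $A(t)$ is a uniformly bounded, trace-free $2\times 2$ matrix and $\psi\in C^\infty_c$ is a cutoff equal to $1$ on a sufficiently large ball. The boundedness of $A(t)$ gives (\ref{lip}) automatically, and the smoothness and compact support of $\psi$ yield (\ref{log-lip}). The symmetric part of $A(t)$ is hyperbolic with a stable eigenaxis along which the patch is compressed toward the origin, while a small antisymmetric part of $A(t)$ is tuned to cancel the intrinsic rotation that two co-rotating patches would experience. The initial patch $\Omega(0)$ is chosen to be a smooth domain of area $\sim 1$ belonging to a one-parameter family $\{\Omega_\ast(d)\}_{d>0}$ of ``approximate wedges'' with a rounded tip at distance $d$ from $0$ and bulk near $\{|z|\sim 1\}$, interpolating between a smooth patch at $d\sim 1$ and the singular wedge at $d\to 0$.

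Granted the Ansatz $\Omega(t)\approx\Omega_\ast(d(t))$, the closest point $z(t)\in\overline{\Omega(t)}$ to the origin follows the flow, and by the envelope theorem
\[
\dot d(t)=\hat z(t)\cdot \bigl(u+S\bigr)(z(t),t).
\]
The Biot--Savart integral over $\Omega(t)\cup\Omega^-(t)$ evaluated near the tip inherits the log-enhancement from the near-corner geometry and produces a radially inward contribution of order $d(t)(1+|\log d(t)|)$, while the compressive part of the strain gives an additional $-\alpha\,d(t)$. Together this yields
\[
\dot d(t)\leq -c\,d(t)\bigl(1+|\log d(t)|\bigr),
\]
with $c$ any prescribed constant larger than $\delta$, and the claimed bound $d(t)\lesssim \exp(-e^{\delta t})$ follows by integration. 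The comparison ${\rm dist}(\Omega(t),0)\sim {\rm dist}(\Omega(t),\Omega^-(t))$ is immediate from central symmetry, since $\Omega^-(t)=-\Omega(t)$.

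The principal technical obstacle is closing the bootstrap: proving that $\Omega(t)$ remains in the Ansatz family $\{\Omega_\ast(d(t))\}$ for all $t\geq 0$ while $A(t)$ is chosen a priori and remains uniformly bounded. This is a coupled problem, since the strain determines the shape evolution while the shape determines the strain parameters (in particular the rotational part of $A$) needed to cancel the residual self-rotation. I would address it with a Schauder-type fixed-point construction of the pair $(\Omega(t),A(t))$, coupled with a Chemin-style striated regularity estimate to keep the boundary $\Gamma(t)$ smooth in the tangential direction even as its transverse scale shrinks double-exponentially. The most delicate step is to show that the inevitable deviations from the exact wedge Ansatz do not spoil the log factor in the Biot--Savart computation, so that the contracting term in the $\dot d$-ODE is preserved all the way up to $t=\infty$.
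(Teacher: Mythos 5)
Your heuristic picture is the right one --- the cross/wedge configuration, the log--enhanced radial velocity $\sim |z|\,|\log|z||$ at the tip (this is exactly Lemma \ref{shest} and Corollary \ref{clara}), and the resulting ODE $\dot d\sim -d\log d^{-1}$ integrating to a double exponential. But the proof as proposed has a genuine gap, and it is precisely the gap the paper is built to avoid. Your entire argument rests on the bootstrap you defer to the last paragraph: that with an \emph{a priori} prescribed, bounded, smooth strain $S(z,t)=A(t)z\psi(z)$, the patch stays in the Ansatz family $\{\Omega_\ast(d)\}$ for all time, so that the near-tip geometry keeps producing the coefficient in front of $d\log d^{-1}$ and the antisymmetric part of $A(t)$ keeps cancelling the self-induced rotation of the bulk. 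This is a fully coupled shape--strain problem, and the Schauder/striated-regularity argument you invoke is not sketched in any checkable form. The paper explicitly flags this as the hard part: it states that closing such a fixed-point argument has only been achieved for a model kernel, and poses the smooth-strain version (which your $S$ would give) as an \emph{open problem}. So your proposal reduces the theorem to an unproved claim that is, per the paper itself, open.

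The paper's route inverts the logic. It prescribes the \emph{entire} velocity field $\Psi(z,t)$ (hyperbolic $\nabla^\perp\Lambda_s^{(t)}$ near the origin, a feeder hyperbolic point at $(-2,2)$ whose Lemma \ref{ic} manufactures the initial data $\epsilon(t)=\exp(-e^{\varrho t})$, laminar and frozen zones for the bulk), lets $\Gamma(t)$ evolve under $\Psi$ so that its shape near the origin is computed in closed form in Section \ref{cauchy} (the self-similar profile $\widehat y=\widehat x^{-\omega+\overline{o}(1)}$ with the explicit rate $f(t)$ in (\ref{omg})), and only then \emph{defines} $S=\Psi-\pi\nabla^\perp\Delta^{-1}\theta$. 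The theorem then reduces to showing this difference is $O(|z|)$ and log-Lipschitz, which is done by the comparison lemma of Section 4: the patch's self-induced velocity is compared to that of the smoothed cross $E(0.1\epsilon)\cup E^-(0.1\epsilon)$, using the quantitative bounds (\ref{ugolok}), (\ref{predel2}) on how fast the computed curve hugs the separatrices. No fixed point is needed, at the cost of a strain that is only log-Lipschitz and is determined a posteriori rather than being the simple linear field you propose. If you want to salvage your approach, you must either actually close the bootstrap (solving the paper's open problem) or restructure along the paper's lines, where the double-exponential rate is an output of an explicitly integrable model flow rather than of a self-consistency argument.
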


{\bf Remark 1.} As it will be clear from the proof, these contours
will touch each other at $t=+\infty$ and the touching point is at
the origin. In the local coordinates around the origin the functions
parameterizing the contours converge to $\pm |x|$ in a self-similar
way which will be described in detail. \smallskip

{\bf Remark 2.} The simple modification of the Yudovich theory (see,
e.g., \cite{bm}) implies that the patch evolution given by
(\ref{euler-g1}) is uniquely defined provided that $S(z,t)$
satisfies (\ref{log-lip}) and is uniformly bounded, the latter is
warranted by (\ref{lip}). Since our strain $S$ is divergence-free
and odd, this dynamics will also preserve the area and the central symmetry. The latter, in particular, implies that the
origin is the stationary point for the dynamics. In the corollary
\ref{cont-l} below, we will give the lower bound on the distance of
any particle trajectory to the origin. This estimate will prove that
the theorem \ref{main} is essentially sharp.\bigskip

Now, we need to explain why the strain we add is indeed a small
correction from the dynamical perspective. This will be done in the
following elementary lemma. We will show that under the $S$--strain
alone no point can approach the origin in the rate faster than
exponential as long as assumption (\ref{lip}) is made and so it is
the nonlinear term $\nabla \theta \cdot u$ in (\ref{euler-g1}) that
produces the ``double exponentially'' fast singularity formation.

\begin{lemma}\label{easy}
Let $S(z,t)$ be an odd  vector field that satisfies (\ref{lip}) and
(\ref{log-lip}). Consider $\theta(z,t)=\chi_{\Upsilon(t)}(z)$ which
solves
\begin{equation}\label{transport}
\dot \theta=\nabla \theta\cdot S(z,t), \quad
\theta(z,0)=\chi_{\Upsilon(0)}(z)
\end{equation}
where ${\rm dist}(\Upsilon(0), 0)>0$ and $\Upsilon(0)$ is a compact
set. Then,
\begin{equation}
{\rm dist}(\Upsilon(t),0)\gtrsim e^{-Ct}\label{lbnew}
\end{equation}
\end{lemma}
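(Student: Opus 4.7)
The plan is to reduce the statement about the set $\Upsilon(t)$ to a pointwise statement about trajectories of the flow generated by $S$, and then to apply Gronwall on $|z(t)|$.

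First I would invoke uniqueness of the flow. Because $S(\cdot,t)$ is log--Lipschitz in $z$ uniformly in $t$ by (\ref{log-lip}) and bounded in $t$ for fixed $z$ (controlled by (\ref{lip})), Osgood's criterion guarantees that the ODE
\[
\dot z(t)=S(z(t),t),\qquad z(0)=z_0
\]
has a unique global solution $z(t)=\Phi_t(z_0)$ for every $z_0\in\mathbb{C}$, and $\Phi_t$ is a homeomorphism of the plane. The transport equation (\ref{transport}) then gives $\theta(z,t)=\chi_{\Upsilon(0)}(\Phi_t^{-1}z)$, so $\Upsilon(t)=\Phi_t(\Upsilon(0))$.

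The heart of the matter is the pointwise lower bound $|\Phi_t(z_0)|\geq |z_0|e^{-Ct}$. Since $S$ is odd we have $S(0,t)=0$, which makes the origin a stationary trajectory; combined with uniqueness this already implies that a trajectory with $z_0\neq 0$ never reaches the origin. To get the quantitative bound I would differentiate $|z(t)|^{2}$:
\[
\frac{d}{dt}|z(t)|^{2}=2\,z(t)\cdot S(z(t),t),
\]
and apply Cauchy--Schwarz together with (\ref{lip}) to obtain
\[
\left|\frac{d}{dt}|z(t)|^{2}\right|\leq 2|z(t)|\,|S(z(t),t)|\leq 2C_0|z(t)|^{2},
\]
where $C_0=\sup_{t,z\neq 0}|S(z,t)|/|z|$. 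Gronwall's inequality then yields $|z(t)|^{2}\geq |z_0|^{2}e^{-2C_0 t}$.

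To finish, pick $C=C_0$ and take the infimum over $z_0\in\Upsilon(0)$:
\[
\mathrm{dist}(\Upsilon(t),0)=\inf_{z_0\in\Upsilon(0)}|\Phi_t(z_0)|\geq e^{-C t}\inf_{z_0\in\Upsilon(0)}|z_0|=e^{-Ct}\,\mathrm{dist}(\Upsilon(0),0),
\]
which is (\ref{lbnew}) since $\mathrm{dist}(\Upsilon(0),0)>0$ by hypothesis. The only step that requires any care is the invocation of Osgood uniqueness to make the particle-trajectory argument rigorous; once that is in place, the estimate is a one-line Gronwall computation exploiting only that $S$ is odd and linearly bounded near the origin, which is exactly the content of (\ref{lip}).
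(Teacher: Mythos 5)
Your proposal is correct and follows essentially the same route as the paper: uniqueness of characteristics via the log--Lipschitz (Osgood) condition, the observation that oddness of $S$ forces $S(0,t)=0$, and a Gronwall estimate on $r=|z|^2$ using (\ref{lip}) to get $r(t)\geq r(0)e^{-Ct}$. The only cosmetic difference is a sign convention in the characteristic ODE, which does not affect the estimate.
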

\begin{proof}
Clearly (\ref{transport}) is a transport equation and we have
\[
\dot z=-S(z,t),\quad z(0)=z_0
\]
for the characteristics $z(z_0,t)$. It has the unique solution (see
lemma 3.2, page 67, \cite{mp}) due to log-Lipschitz regularity
(\ref{log-lip}). As $S(z,t)$ is odd, $S(0,t)=0$ and so the
origin is a stationary point. The estimate (\ref{lip}) yields
\[
|S(z,t)|<C|z|, \quad \forall t\geq 0\quad  {\rm and}\quad \forall
z\in \mathbb{C}
\]
Therefore,
\[
|\dot r|\leq Cr, \quad r=|z|^2
\]
and
\[
r(0)e^{-Ct}\leq r(t)
\]
This implies (\ref{lbnew}) as the patch $\Upsilon(t)$ is convected
by the flow.
\end{proof}\smallskip

Next, we will show that the theorem \ref{main} is essentially sharp
on the double exponential scale. To explain that, we need the next
lemma. As a motivation let us start with two simple questions: if we
are given a patch $\Omega$ of area $\sim 1$ such that the velocity
generated by $\Omega$ is zero at the origin, how large can the
radial component of the velocity be in the fixed point close to the
origin? How should we choose $\Omega$ to get the maximum (or get
close to the maximum) of this value? Clearly, we are interested in
the radial component since it is the one which will push the points
to the origin or away from it. As the problem is invariant under the
rotation, we can take $z=(x,0), \,\, x>0$ without loss of
generality.
\begin{lemma}\label{shest}
Assume that the set $\Omega$ is such that $|\Omega|\sim 1$ and
\[
u(0)=0, \quad u(z)=\pi \nabla^\perp \Delta^{-1}
\chi_\Omega=(u_1,u_2)
\]
Then,
\[
u_1=x\left(\Theta_\Omega(|z|)+O(1)\right), \quad 0<x<1, \quad
z=(x,0)
\]
and
\[
\Theta_\Omega(|z|)= \int_{\Omega\cap \{|\xi|>|z|\}} \frac{\xi_1\xi_2}{(\xi_1^2+\xi_2^2)^2}d\xi
\]
\end{lemma}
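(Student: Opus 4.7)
The plan is to use the Biot-Savart formula for $u$ and exploit the vanishing $u(0)=0$ to extract a factor of $x$ cleanly, then compare the resulting singular integral to $\Theta_\Omega(|z|)$ by an explicit subtraction.

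First, I would use $u=\pi \nabla^{\perp}\Delta^{-1}\chi_\Omega$ together with the standard logarithmic Newtonian kernel on $\mathbb{R}^2$ to write
\[
u_1(x,0)=\frac{1}{2}\int_\Omega \frac{\xi_2}{(x-\xi_1)^2+\xi_2^2}\,d\xi.
\]
The hypothesis $u_1(0)=\frac12\int_\Omega \xi_2/|\xi|^2\,d\xi=0$ lets me subtract a vanishing term, put the integrand over a common denominator, and pull out a factor of $x$, giving
\[
u_1(x,0)=x\int_\Omega \frac{\xi_2(\xi_1-x/2)}{|z-\xi|^2\,|\xi|^2}\,d\xi=:x\cdot I(x).
\]
The lemma then reduces to showing $I(x)=\Theta_\Omega(|z|)+O(1)$.

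Second, I would split $\Omega$ at $|\xi|=x=|z|$. On the inner piece $\Omega\cap\{|\xi|<x\}$ I would split again at $|\xi|=x/2$: for $|\xi|<x/2$ the denominator $|z-\xi|$ is $\gtrsim x$, and after using $|\xi_2|\leq |\xi|$ and polar coordinates around the origin the integrand is $\lesssim x^{-1}|\xi|^{-1}$, producing an $O(1)$ integral; for $x/2\le |\xi|<x$ the factor $|\xi|^{-2}$ is $\lesssim x^{-2}$ and the possible singularity at $\xi=z$ is handled by the identity $\xi_2=(\xi-z)_2$ (valid because $z_2=0$), which gives $|\xi_2|\leq |\xi-z|$ and converts $|\xi-z|^{-2}$ into the integrable $|\xi-z|^{-1}$. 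Both subregions contribute $O(1)$ to $I(x)$.

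Third, on the outer piece $\Omega\cap\{|\xi|>x\}$ I would subtract $\Theta_\Omega(|z|)$, combine the resulting fractions, and compute the numerator explicitly; a short algebraic calculation yields the factor of $x$
\[
I(x)-\Theta_\Omega(|z|)=x\int_{\Omega\cap\{|\xi|>x\}}\frac{\xi_2\bigl(\tfrac{3\xi_1^2-\xi_2^2}{2}-x\xi_1\bigr)}{|z-\xi|^2\,|\xi|^4}\,d\xi.
\]
It then suffices to bound this last integral by $O(1/x)$. For $|\xi|\geq 2x$ one has $|z-\xi|\sim|\xi|$, the integrand is $O(|\xi|^{-3})$, and the radial tail contributes $\lesssim 1/x$. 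For $x\le|\xi|\le 2x$ the factor $|\xi|^{-4}$ is $\lesssim x^{-4}$ and, using $|\xi_2|\leq |\xi-z|$ once more, the leftover $|\xi-z|^{-1}$ singularity is integrable over the disk $|\xi-z|<3x$ and again gives $\lesssim 1/x$.

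The main obstacle is the non-integrable $|\xi-z|^{-2}$ singularity of the Biot-Savart kernel near the evaluation point: any naive modulus bound would produce a logarithmic divergence that destroys the $O(1)$ error term. The decisive device is the observation $\xi_2=(\xi-z)_2$, available precisely because $z$ lies on the real axis; it trades one power of $|z-\xi|^{-1}$ for a harmless factor and makes all the error integrals convergent with the right rate in $x$.
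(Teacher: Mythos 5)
Your proposal is correct and follows essentially the same route as the paper: both start from $u(z)-u(0)$, combine the Biot--Savart kernel with $\xi/|\xi|^2$ to extract the factor $x$, split the domain near/far from the origin, and identify $\Theta_\Omega$ in the far region up to an $O(1)$ error. The only cosmetic difference is that the paper bounds the near-field contribution by estimating the two kernels separately (each has only an integrable $|\cdot|^{-1}$ singularity over a disk of radius $\sim|z|$), so it never needs your $|\xi_2|\le|\xi-z|$ device there, while in the far field it splits off the $m_1$ term and rescales by $|z|$ rather than subtracting $\Theta_\Omega$ algebraically — all equivalent bookkeeping.
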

\begin{proof}
This estimate is standard, see lemma~8.1 from
\cite{bm}, pages~315--318. Indeed,
\[
u(z)=u(z)-u(0)=\frac{1}{2}\int_\Omega \left( \frac{z-\xi}{|z-\xi|^2}+\frac{\xi}{|\xi|^2}\right)^\perp d\xi
\]
We have
\[
\int_{\Omega\cap \{|\xi|<2|z|\}} \left| \frac{z-\xi}{|z-\xi|^2}+\frac{\xi}{|\xi|^2}\right| d\xi\lesssim |z|
\]
and
\[
\frac{1}{2}\int_{\Omega\cap \{|\xi|>2|z|\}} \left(
\frac{z-\xi}{|z-\xi|^2}+ \frac{\xi}{|\xi|^2}\right)^\perp
d\xi=(m_1+m_2, \cdot)
\]
where
\[
m_1=-\frac{x^2}{2}\int_{\Omega\cap \{|\xi|>2|z|\}}
\frac{\xi_2}{(\xi_1^2+\xi_2^2)((x-\xi_1)^2+\xi_2^2)}d\xi, \quad\quad
|m_1|\lesssim |z|
\]
and
\[
m_2={x} \int_{\Omega\cap \{|\xi|>2|z|\}}
\frac{\xi_1\xi_2}{(\xi_1^2+\xi_2^2)((x-\xi_1)^2+\xi_2^2)}d\xi
\]
For $m_2$, after scaling by $|z|=x$,
\[
m_2={x} \int\limits_{\widehat \Omega\cap \{|\widehat\xi|>2\}}
\frac{\widehat\xi_1\widehat\xi_{2}}{(\widehat\xi_1^2+\widehat\xi_2^2)
((1-\widehat\xi_1)^2+\widehat\xi_2^2)}d\widehat\xi=x(\Theta_\Omega+O(1))
\]
\end{proof}
\begin{corollary}\label{clara}
For fixed $z=(x,0), x\in (0,1/2)$, we have
\begin{equation}\label{optima}
\max_{\Omega: |\Omega|\sim 1} \Theta_\Omega(|z|)=-\log |z|+O(1)=\Theta_{E\cup E^-}(|z|)+O(1)
\end{equation}
where $E=[0,1]^2$
\end{corollary}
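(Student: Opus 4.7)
The plan is to establish matching upper and lower bounds, both of the form $-\log|z|+O(1)$. The natural coordinates are polar: writing $\xi_1 = r\cos\theta$, $\xi_2 = r\sin\theta$, the integrand collapses nicely to
\[
\frac{\xi_1\xi_2}{(\xi_1^2+\xi_2^2)^2}\,d\xi \;=\; \frac{\sin(2\theta)}{2r}\,dr\,d\theta,
\]
so that for every $\Omega$, if I set $\Omega_r=\{\theta:(r\cos\theta,r\sin\theta)\in\Omega\}$, $g(r)=|\Omega_r|$, and $f(r)=\int_{\Omega_r}\sin(2\theta)/2\,d\theta$, then $\Theta_\Omega(|z|)=\int_{|z|}^\infty f(r)/r\,dr$ and $|\Omega|=\int_0^\infty r\,g(r)\,dr$.

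For the upper bound I would exploit two pointwise estimates on $f$: first, $|f(r)|\le \int_0^{2\pi}(\sin 2\theta)_+/2\,d\theta = 1$ (independently of $g(r)$), and second, the trivial $|f(r)|\le g(r)/2$. Splitting at some fixed $R\sim 1$,
\[
\Theta_\Omega \;\le\; \int_{|z|}^{R}\frac{dr}{r} \;+\; \int_R^\infty \frac{g(r)}{2r}\,dr,
\]
the first term yields $-\log|z|+O(1)$, and for the second I use $1/r\le r/R^2$ on $[R,\infty)$ together with the area constraint $\int r g\,dr=|\Omega|\sim 1$ to bound it by $|\Omega|/(2R^2)=O(1)$. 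The subtle point — and the main obstacle — is getting the coefficient of $-\log|z|$ exactly equal to $1$: the crude majorization $|\xi_1\xi_2|/(\xi_1^2+\xi_2^2)^2 \le 1/(2r^2)$ would propagate a spurious factor of $\pi$ (angular average replaced by maximum), and the sharp constant forces one to keep the sign cancellation of $\sin(2\theta)$ that is encoded precisely in the bound $|f(r)|\le 1$.

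For the lower bound I simply compute $\Theta_{E\cup E^-}$ directly. By central symmetry and the fact that $\xi_1\xi_2\ge 0$ on both $E$ and $E^-$,
\[
\Theta_{E\cup E^-}(|z|) \;=\; 2\int_{E\cap\{|\xi|>|z|\}}\frac{\sin(2\theta)}{2r}\,dr\,d\theta
\;=\;\int_0^{\pi/2}\sin(2\theta)\,\log\!\bigl(r_{\max}(\theta)/|z|\bigr)\,d\theta,
\]
where $r_{\max}(\theta)=\min(1/\cos\theta,1/\sin\theta)\ge 1$ describes the boundary of $E=[0,1]^2$ in polar form. Since $\int_0^{\pi/2}\sin(2\theta)\,d\theta = 1$ and $\sin(2\theta)\,\log r_{\max}(\theta)$ is integrable on $[0,\pi/2]$ (near the endpoints $\sin(2\theta)\sim 2\theta$ kills the logarithmic singularity of $-\log\cos\theta$ or $-\log\sin\theta$), the integral equals $-\log|z|+O(1)$, matching the upper bound.

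Putting the two bounds together gives the asserted equality, and the proof shows that $E\cup E^-$ is, up to an additive constant, an extremizer. The only quantitative care needed is in checking the endpoint integrability in the lower bound computation and in choosing $R$ large enough (any fixed $R$ comparable to the diameter of $\Omega$ suffices) to make the tail term in the upper bound harmless.
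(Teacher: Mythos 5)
Your argument is correct and is exactly the ``direct calculation in polar coordinates'' that the paper invokes without writing out: the identity $\xi_1\xi_2(\xi_1^2+\xi_2^2)^{-2}d\xi=\tfrac{1}{2r}\sin(2\theta)\,dr\,d\theta$, the angular bound $\int(\sin 2\theta)_+/2\,d\theta=1$ plus the area constraint for the upper bound, and the explicit evaluation over $E\cup E^-$ for the lower bound. (One cosmetic remark: since $r_{\max}(\theta)=\min(1/\cos\theta,1/\sin\theta)\in[1,\sqrt2]$, the term $\int_0^{\pi/2}\sin(2\theta)\log r_{\max}(\theta)\,d\theta$ has no endpoint singularity at all, so the integrability check you flag is vacuous.)
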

\begin{proof}
The proof is a direct calculation in the polar coordinates.
\end{proof}
In particular, these results give the leading term for the size of
radial component of the velocity field generated by the centrally
symmetric configuration of patches since for this configuration the
velocity is always zero at the origin. We also see that the ``cross
configuration", i.e., $E\cup E^-$, gives the optimal value up to an
additive constant. This configuration will also play a major role in
the proof of theorem \ref{main}.

We can immediately apply two previous results to the problem of
patch dynamics.
\begin{corollary}\label{cont-l}
Under the conditions of the theorem \ref{main}, we have
\begin{equation}
{\rm dist}(\Omega(t),0)\gtrsim \exp(-Ce^{t}) \label{lower-ah}
\end{equation}
where $C$ depends on the constant in (\ref{lip}) and on the initial distance between the contours.
\end{corollary}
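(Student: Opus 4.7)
The plan is to integrate a logarithmic Gronwall inequality along the characteristics of the full flow. I would fix any $z_0 \in \Omega(0)$ and follow $z(t)$ solving $\dot z = u(z,t) + S(z,t)$ with $z(0) = z_0$; by incompressibility $z(t) \in \Omega(t)$ for every $t \geq 0$. Writing $r(t) := |z(t)|$, my goal is to bound
\[
\tfrac{1}{2}\tfrac{d}{dt}r^2 = z \cdot u(z,t) + z \cdot S(z,t)
\]
in absolute value and then convert the resulting estimate into a Gronwall inequality for $\rho := -\log r$.

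For the velocity term I first note that central symmetry of $\Omega(t)\cup\Omega^-(t)$ forces $u(0,t)=0$ and that the area is preserved, so $|\Omega(t)\cup\Omega^-(t)|=2|\Omega(0)|\sim 1$ for every $t$. Rotating coordinates so that $z$ lies on the positive real axis (this merely rotates the patches and preserves their area, central symmetry, and the value of $z\cdot u$) and applying Lemma \ref{shest} to the rotated configuration $\widetilde\Omega$ yields $z \cdot u(z,t) = r^2(\Theta_{\widetilde\Omega}(r)+O(1))$. Corollary \ref{clara} bounds $\Theta_{\widetilde\Omega}(r)\leq -\log r+O(1)$; applying the Corollary to the reflection of $\widetilde\Omega$ across the real axis (which negates the integrand defining $\Theta$) gives the matching lower bound, hence $|\Theta_{\widetilde\Omega}(r)|\leq -\log r+O(1)$ for $0<r<1/2$. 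The strain piece is controlled directly by (\ref{lip}): $|z \cdot S(z,t)|\leq C_S r^2$. Combining,
\[
\left|\frac{dr^2}{dt}\right|\leq 2 r^2(-\log r+C_0),\qquad 0<r<1/2,
\]
where $C_0$ depends only on $C_S$ and on the absolute constants from Lemma \ref{shest} and Corollary \ref{clara}.

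Finally I would set $\rho(t)=-\log r(t)$ on the time window where $r(t)<1/2$ (outside this window $r\sim 1$ and there is nothing to prove); the last display becomes $\dot\rho\leq \rho+C_0$, so Gronwall gives $\rho(t)+C_0\leq(\rho(0)+C_0)e^t$, i.e.\ $r(t)\gtrsim\exp(-Ce^t)$ with $C=-\log r(0)+C_0$. The hypothesis $\mathrm{dist}(\Omega(0),\Omega^-(0))\sim 1$ combined with central symmetry forces $r(0)=|z_0|\gtrsim 1$ uniformly in $z_0\in\Omega(0)$, so $C$ is uniform; taking the infimum over $z_0\in\Omega(0)$ yields (\ref{lower-ah}). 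The point I expect to be the main subtlety is verifying that the coefficient of $-\log r$ in the bound on $\Theta_\Omega$ is exactly $1$ and not just some absolute constant, a fact built into the $\pi$ normalization in (\ref{euler-g1}) and the precise form of Corollary \ref{clara}; any larger coefficient $\alpha>1$ would produce $\exp(-Ce^{\alpha t})$ in place of $\exp(-Ce^t)$ and spoil the sharp match with Theorem \ref{main} on the double-exponential scale.
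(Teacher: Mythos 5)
Your argument is correct and is essentially the paper's own proof: the paper also follows a characteristic of the full flow, bounds the radial velocity by $|z|(-\log|z|+C)$ via Lemma \ref{shest} and Corollary \ref{clara} together with (\ref{lip}) for the strain, and closes with the differential inequality $\dot r \leq -r(\log r + C)$ for $r=|z|^2\ll 1$, i.e.\ the same Gronwall estimate on $-\log r$. The only difference is that you spell out the rotation, the two-sided bound on $\Theta_\Omega$, and the uniformity in $z_0$, all of which the paper leaves implicit.
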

\begin{proof}
Indeed, for the trajectory
\[
\dot z=-(u(z,t)+S(z,t)), \quad z(z_0,z)=z_0
\]
we have
\[
\dot r \leq -r (\log r+C), \quad r=|z|^2 \ll 1
\]
which implies (\ref{lower-ah}).
\end{proof}

 The constant $\delta$ from the theorem \ref{main} as
well  as the constant one in front of $t$ in (\ref{lower-ah}) can be
changed by a simple rescaling (i.e., by multiplying the value of
vorticity by a constant which is the same as scaling the time)  thus
the size of $\delta$ is small only when compared to the parameters
of the problem.\smallskip

{\bf Remark 3.} The optimality up to a constant of $E$ in
(\ref{optima}) is the reason why our estimate in the theorem
\ref{main} is essentially sharp on the double exponential scale.
Clearly, one can replace $E$ by any other configuration as long as
it forms a corner of $\pi/2$ at the origin. \smallskip

In $\mathbb{R}^2$, in contrast to $\mathbb{T}^2$,
 the kernel of $\Delta^{-1}$ is easier to write
and $\Delta^{-1}$ can be defined on compactly supported $L^1$
functions so we will address the problem on the whole plane rather
than on $\mathbb{T}^2$. On the 2d torus, similar results hold.
\bigskip

The interaction of two vortices was extensively studied in the
physics literature (see, e.g., \cite{zabusky,saffman}). For example,
the merging mechanism was discussed in  \cite{zabusky} where some
justifications (both numerical and analytical) were given. In our
paper, we provide rigorous analysis of that process and obtain the
sharp bounds.

In \cite{collapse}, the authors study an interesting question of the
``sharp front" formation. Loosely speaking, the sharp front forms
if, for example, two level sets of vorticity, each represented by a
smooth time-dependent curve, converge to a fixed smooth arc as
$t\to\infty$. Let the ``thickness" of the front be denoted by
$\varrho_{front}(t)$. In \cite{collapse}, the following estimate for
2d Euler dynamics is given (see theorem 3, p. 4312)
\[
\varrho_{front}(t)>e^{-(At+B)}
\]
with constants $A$ and $B$ depending only on the geometry of the
front. The scenario considered in our paper is different as the
singularity forms at a point.
\bigskip

The idea of the proof comes from the following very natural
question. Consider an active-scalar dynamics
\begin{equation}
\dot\theta =\nabla\theta\cdot \nabla^\perp (A\theta)
\label{active-sc}
\end{equation}
where $A$ is the convolution  with a kernel $K_A(\xi)$.  In some
interesting  cases, $A=\Delta^{-\alpha}$ with $\alpha>0$ so $K_A$ is
positive and smooth away from the origin. It also obeys some
symmetries inherited from its symbol on the Fourier side, e.g., is
radially symmetric. Perhaps, the most interesting cases are
$\alpha=1$ (2d Euler), which is treated in this paper, and
$\alpha=1/2$ (SQG) already mentioned in the text. If one considers
the problem (\ref{active-sc}) with $A=\Delta^{-\alpha}$ on the 2d
torus $\mathbb{T}^2=[-\pi,\pi]^2$, then there is a stationary
singular weak solution (the author learned about this solution from
\cite{mancho}), a ``cross''
\begin{equation}\label{crosss}
\theta_s=\chi_{E}+\chi_{E^{-}}-\chi_{J}-\chi_{J^{-}}
\end{equation}
where $E=[0,\pi]^2$  and $J=[-\pi,0]\times [0,\pi]$. One can think
about two patches touching each other at the origin and each forming
the right angle. This picture is also centrally symmetric. Now, the
question is: is this configuration stable? In other words, can we
perturb these patches a little so that they will converge to the
stationary solution at least around the origin at $t\to\infty$? The
flow generated by $\theta_s$ is hyperbolic and so is unstable.
However, if one places a curve into the stationary
hyperbolic flow in such a way that its part follows the separatrix of the flow in the attracting direction, then the time evolution of this curve will produce a sharp corner. The
problem of course is that the actual flow is induced by the patch
itself and so it will be changing in time. That suggests that one
has to be very careful with the choice of the initial patch to
guarantee that this process is self-sustaining. Nevertheless, that
seems possible and thus the mechanism of singularity formation
through the hyperbolic flow can probably be justified. We do it here
by neglecting the smaller order terms. In general, the application
of some sort of fixed point argument seems to be needed. Either way,
this scenario is a zero probability event (at least the way the
proof goes) if the ``random'' initial condition is chosen. However,
if one wants to see merging for a long but fixed time, then this can
be achieved for an open set of initial data so from that perspective
our construction is realistic.

We will handle the case $\alpha=1$ only without trying to make the
strain smooth. The question whether one can choose $S(z,t)\in
C^\infty$ seems to be the right one to address (rather than trying
to make $S(z,t)=0$) and we formulate it as an\smallskip

{\bf Open problem.} {\it Can one take the strain $S(z,t)$ in the
theorem \ref{main} to be infinitely smooth?}\smallskip

If the answer is yes, then one can view this strain as the
incompressible flow generated by the some background or another
patch far away which is decoupled from the evolution of our
symmetric pair (that of course is yet another approximation among
many others made in the area of Fluid Dynamics). It is not easy for
us to imagine how to get rid of the strain completely for the 2d
Euler model if one wants to prove the sharp estimate on the rate of
merging. For other models like SQG, this is not ruled out as one has
to control the bulk of the patches only for finite time and for this
finite time the chaotic regime might not be able to start
influencing the picture.

Solving the open problem stated above might require closure
of the fixed point argument outlined above. We were able to do that
so far only for the model equation where the convolution kernel is a
smooth bump. Surprisingly, the analysis needed to justify the
self-sustaining process we discussed is reminiscent to what one has
to do to prove the homogenization in the 2d Euler for $t\to\infty$
(see, e.g., \cite{cag-maf}). This is technically hard and has never
been carried out. There is yet another argument indicating that the
answer to the open problem above is positive. In \cite{saffman}, the
existence of very interesting $V$-shapes is mentioned (no proofs
though). The $V$-shape is a patch that rotates with the constant
angular velocity under the 2d Euler dynamics. Saffman indicates that
there is a continuous parametric curve of these $V$-shapes, each
represented by a pair of centrally symmetric patches with smooth
boundary. The endpoint of this curve however is represented by a
pair of centrally symmetric patches that touch each other at the
origin and form a sharp corner there (a singularity we want to produce dynamically). It would be interesting to
prove existence of this curve and show that the dynamics
representing the evolution of our patch can move across these
``invariant sets" (i.e., the particular $V$--shapes) thus
approaching the endpoint of this parametric curve locally around the
origin. That would be consistent with the self-sustaining scenario
we want to understand.
\smallskip

For $\alpha>1$, one can show that the merging happens but the
attraction to the origin is only exponential. This case is much
easier as the convolution kernel is not singular anymore so, for
example, one can take the strain to be exponentially decaying in time.

For $\alpha<1$, we expect our technique to show that the contours
can touch each other in finite time thus proving the outstanding
problem of blow up for $\alpha=1/2$. This, however, will require
serious refinement of the method.\smallskip

There are other stationary singular weak solutions known for 2d
Euler dynamics and for other problems in fluid mechanics. It would
be interesting to perform analogous stability analysis for each of
them with the goal of, e.g., solving the problems of blow up. From
that perspective the idea is quite general: find the singular
stationary solution which generates the ``hyperbolic dynamics" and
construct the stable manifold around it which will belong to the
functional space of high regularity. For the ``true" 2d Euler or SQG
one can try some modification of the same ``cross
configuration".\smallskip

The structure of the paper is as follows. In the sections 2, 3, and
4, we prove some auxiliary results. The section 5 contains the
construction of  $\Omega(t), S(z,t)$ and the proof of the theorem
\ref{main}.  In appendix, we find an approximate self-similar
solution to the local equation of  curve's evolution.

\section{Velocity field generated by the limiting configuration}

In this section,  we consider the velocity field generated by one
particular pair of patches which resembles around the origin the
limiting ($t=\infty$) configuration of the dynamics described in
theorem \ref{main}. In $\mathbb{R}^2$, take
$\theta_s(z)=\chi_{E}(z)+\chi_{E^-}(z)$, where $E=[0,1]^2$. Recall
that in the case of $\mathbb{T}^2$ analogous configuration
(\ref{crosss}) is a steady state. Let $x,y>0$, the other cases can
be treated using the symmetry of the problem. We have
\[
 \pi \nabla \Delta^{-1}\theta_s= \frac 12\int_{E\cup E^-}
\frac{(x-\xi_1,y-\xi_2)}{(x-\xi_1)^2+(y-\xi_2)^2}d\xi_1 d\xi_2
\]
Integrating, we have for the first component
\[
\frac 14 \int_0^1 \Bigl(\log (x^2+(y+\xi_2)^2)-\log(x^2+(y-\xi_2)^2)\Bigr)
d\xi_2+r_1(x,y)
\]
\[
=\frac 12 \int_0^y \log (x^2+\xi^2)d\xi+r_2(x,y)
\]
where $r_{1(2)}(x,y)$ are odd and smooth around the origin.
Integrating by parts, we get the following expression for the
integral
\[
y\log (x^2+y^2)-2y+2x\arctan ({y}/{x})
\]
By symmetry, for the second component of the gradient we have
\[
x\log(x^2+y^2)-2x+2y\arctan(x/y)
\]
Thus, the velocity is
\begin{equation}\label{ididi}
u_s(z)=\pi \nabla^\perp \Delta^{-1}\theta_s=\frac 12 (-x,y)\log(x^2+y^2)+r(x,y)
\end{equation}
around the origin, where $|r(z)|\lesssim |z|$. The correction
$r(x,y)$ has a bounded gradient in $0<x,y<1$ and so it belongs to
the Lipschitz class. This ensures that the first term in
(\ref{ididi}) is log-Lipschitz.

Consider a positive function $\Phi(\phi)$ on the unit circle
$\phi\in \mathbb{T}$, $\pi/2$-periodic, smooth,
$\Phi(\phi)=\Phi(\pi/2-\phi)$, and such that
\[
\Phi(\phi)=\cos \phi, |\phi|<\phi_0=\arctan 0.5;\quad
\Phi(\phi)=\sin\phi, \pi/2-\phi_0<\phi<\pi/2+\phi_0
\]
Define the following potential:
\begin{equation}\label{l0}
\Lambda_s(z)=xy\log Q(x,y),\quad Q(x,y)=|z|\Phi(\phi), \quad {\rm
where}\,\, z=|z|e^{i\phi}
\end{equation}
Clearly, $Q$ is homogeneous of order one and
\begin{equation}\label{horosho}
Q(x,y)=x\,\,{\rm if}\,  |\phi|<\phi_0;\quad Q(x,y)=y\,\,{\rm if}\,
|\phi-\pi/2|<\phi_0
\end{equation}

\begin{lemma}
Around the origin, we have
\begin{equation}
u_s(z)=\nabla^{\perp} \Lambda_s(z)+r(z) \label{rs1}
\end{equation}
where $r(z)$ is log-Lipschitz and
\begin{equation}\label{sbg}
 |r(z)|\lesssim |z|
\end{equation}
\end{lemma}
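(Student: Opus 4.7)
The plan is to differentiate $\Lambda_s=xy\log Q$ explicitly and match its leading singular piece against the formula (\ref{ididi}) already obtained for $u_s$; the discrepancy will be a sum of functions that are visibly homogeneous of degree one with smooth angular profiles, from which Lipschitz regularity (hence \emph{a fortiori} log-Lipschitz regularity) and the bound $|r(z)|\lesssim|z|$ follow.

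By the product rule,
\[
\partial_x\Lambda_s=y\log Q+xy\,\frac{Q_x}{Q},\qquad \partial_y\Lambda_s=x\log Q+xy\,\frac{Q_y}{Q},
\]
so that
\[
\nabla^\perp\Lambda_s=(-x,y)\log Q+\Bigl(-\frac{xyQ_y}{Q},\,\frac{xyQ_x}{Q}\Bigr).
\]
Now use the factorization $Q=|z|\Phi(\phi)$ to split $\log Q=\log|z|+\log\Phi(\phi)$, and note that $\tfrac12\log(x^2+y^2)=\log|z|$. The singular $(-x,y)\log|z|$ pieces in $\nabla^\perp\Lambda_s$ and in (\ref{ididi}) are then \emph{identical} and cancel, leaving
\[
r(z):=u_s(z)-\nabla^\perp\Lambda_s(z)=r_{\mathrm{old}}(z)+(x,-y)\log\Phi(\phi)+\Bigl(\frac{xyQ_y}{Q},-\frac{xyQ_x}{Q}\Bigr),
\]
where $r_{\mathrm{old}}$ is the Lipschitz $O(|z|)$ remainder supplied by (\ref{ididi}).

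It remains to show that the two new terms are Lipschitz and $O(|z|)$. The function $\Phi$ is smooth, $\pi/2$-periodic, and strictly positive on $\mathbb{T}$ (positivity is built into the definition of $\Phi$, since on the cones $|\phi|<\phi_0$ and $|\phi-\pi/2|<\phi_0$ it equals $\cos\phi$ or $\sin\phi$, and the positive smooth interpolation in between is assumed). Hence $\log\Phi\in C^\infty(\mathbb{T})$ is bounded. Moreover, $Q$ is smooth on $\mathbb{R}^2\setminus\{0\}$ and $Q\ge c|z|$, while $Q_x,Q_y$ are homogeneous of degree zero with smooth angular profile. Each of the four scalar summands above is therefore of the form $|z|\,\psi(\phi)$ with $\psi\in C^\infty(\mathbb{T})$. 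Any such function has a gradient that is homogeneous of degree zero with smooth angular profile — hence uniformly bounded on $\mathbb{R}^2\setminus\{0\}$ — and is continuous at the origin with value $0$; together these two facts yield Lipschitz continuity on all of $\mathbb{R}^2$, together with the pointwise bound $|z|\|\psi\|_\infty$, giving both (\ref{sbg}) and the log-Lipschitz property of $r$.

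The proof is essentially an algebraic identity plus a routine regularity check; the only point requiring care is the identification $\log Q=\log|z|+\log\Phi(\phi)$ that produces the exact cancellation of the singular term, since a mismatch there would leave a nontrivial $\log|z|$ contribution which is \emph{not} $O(|z|)$. Once this cancellation is arranged by the specific choice of $\Lambda_s$ in (\ref{l0}), the remaining obstacles are only the verification that $\Phi>0$ and that $Q$ stays smooth and comparable to $|z|$, both of which are immediate from the construction of $\Phi$.
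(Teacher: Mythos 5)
Your proof is correct and follows essentially the same route as the paper: the paper simply asserts that $\nabla^\perp\Lambda_s=\tfrac12(-x,y)\log(x^2+y^2)+r_3$ with $|r_3|\lesssim|z|$ and $\nabla r_3$ bounded "by direct calculation," and your argument is precisely that calculation carried out explicitly (the splitting $\log Q=\log|z|+\log\Phi(\phi)$ and the observation that every remainder term is $|z|$ times a smooth function of the angle). The only difference is that you supply the details the paper omits.
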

\begin{proof}
It is sufficient to handle $x,y>0$. One can write
\[
\nabla^{\perp} \Lambda_s(z)=\frac 12 (-x,y)\log(x^2+y^2)+r_3(z)
\]
Then, it is a direct calculation to check that $|r_3(x,y)|\lesssim
|z|$ and that $\nabla r_3(z)$ is bounded.
\end{proof}
{\bf Remark 1.} This particular choice of $\Lambda_s$ was made only
to simplify the calculations below.

We will focus later on the first term in (\ref{rs1}) as the second
one has a smaller size and will be absorbed into the strain later
(see the formulation of the main theorem). The level sets of
$\Lambda_s$ around the origin are hyperbolas asymptotically. Indeed,
take $x=\epsilon \widehat x, y=\epsilon \widehat y$ and consider
\[
\Lambda_s(z)=\epsilon^2 \log \epsilon
\]
or
\begin{equation}\label{hyperbola}
 \widehat x\widehat
y\left(1+\frac{\log|\widehat z|}{\log\epsilon}+\frac{\log
\Phi(\phi)}{\log \epsilon}\right)=1
\end{equation}
and so the second and the third terms are small as $\epsilon\to 0$
provided that $|\widehat z|\sim 1$.

\smallskip For the velocity,
\begin{equation}\label{ode}
\nabla^\perp \Lambda_s(z)= \left\{
\begin{array}{cc}
(-x(\log y+1), y\log y), & \phi\in (\pi/2-\phi_0,\pi/2)\\
(-x\log x, y(\log x+1)), & \phi\in (0,\phi_0)
\end{array}
\right.
\end{equation}
Within the sector $\phi\in (\phi_0,\pi/2-\phi_0)$, the formula for
the gradient is more complicated but we will see later that the part
of the patch's boundary that belongs to this sector will not have a
significant contribution to the velocity and so very rough estimates
will suffice.

These formulas show that the flow generated by $\Lambda_s$ is
hyperbolic around the origin. Moreover, the attraction and repelling
is double exponential. For example, the point $(0,y_0)$ will have a
trajectory $(0,y_0^{e^t})$. In the next section, we will study the
Cauchy problem associated to this flow.

{\bf Remark 2.} Later on we will also need to handle the vector
field generated by the ``smoothed cross configuration". Suppose we
have a small parameter $\epsilon$ and
\begin{equation}\label{sglad}
E(\epsilon)=E\backslash \{|z|<\epsilon\}
\end{equation}
Then,  for $\nabla^\perp \Delta^{-1} \chi_{E(\epsilon)}$, the direct
calculation analogous to the one done in lemma \ref{shest} yields
\begin{equation}\label{sem}
\pi \nabla^\perp \Delta^{-1} \chi_{E(\epsilon)\cup
E(\epsilon)}=\nabla^\perp \Bigl( xy\log \epsilon \Bigr)+{O}(|z|),
 \quad |z|\lesssim \epsilon
\end{equation}\bigskip

\section{The Cauchy problem for the dynamics generated by
the limiting configuration}\label{cauchy}

The construction of the patches and the strain in the main theorem
will be based on the calculations done in this section.  We consider
the following Cauchy problem where $t$ is a parameter:
\[
\dot z(\tau,t)=\nabla^\perp \Lambda_s(z(\tau,t)), \quad\tau\geq 0
\]
the formula for the right hand side is given in (\ref{ode}), and the
initial position is
\begin{equation}
z(0,t)=(\epsilon(t), e^{-1}), \quad \epsilon(t)=\exp(-e^{\varrho
t})\label{koshi}
\end{equation}
For now, $\varrho$ is some fixed positive number.   So, at each time
$t$, a point (let us say it has an {\bf index} $t$) with  initial
position $(\epsilon(t),e^{-1})$ starts moving under the flow so that
at time $t+\tau$ (i.e., time $\tau$ spent since the start of the
motion) we will see the arc built by points that are indexed by
$t+\tau_1, \tau_1\in [0,\tau]$. Let us call this arc $\Gamma_1(t)$
(rotated by $\pi/4$ in the anticlockwise direction, it will be a
part of the $\Gamma(t)=\partial\Omega(t)$ from the main theorem).

Our goal in this section is to study the $\Gamma_1(t)$ around the
origin when $t\to+\infty$. This is a straightforward and rather
tedious calculation which we have to perform.

For each $t$, consider the trajectory $z(\tau,t)$ within the fixed
neighborhood of the origin: $\{[0,e^{-1}]\times [0,e^{-1}]\}$. This
trajectory will be an invariant set for potential $\Lambda_s$ that
corresponds to $\Lambda_s=-e^{-1}\epsilon(t)$. The parameter $t$,
again, can be regarded as the tag of this trajectory. For
$t\to\infty$, these trajectories will look more and more like
hyperbolas around zero due to (\ref{hyperbola}).

There will be three important events for each trajectory: the first
one is when it crosses the ray $\phi=\pi/2-\phi_0$ (i.e., $y=2x$) at
time $\tau=T_1(t)$, the second one is when it crosses the ray
$\phi=\phi_0$ (i.e., $y=x/2$) at time $T_2(t)$, and the third one is
when it crosses the line $x=e^{-1}$ and thus leaves the domain of
interest $0<x<e^{-1}, 0<y<e^{-1}$. We will denote this time by
$T_3(t)$. Since the trajectory is an invariant set for $\Lambda_s$,
\begin{equation}\label{fact-tr}
z(t,T_1(t))=(\widehat\epsilon(t), 2\widehat\epsilon(t))
\end{equation}
and $\widehat\epsilon (t)$ can be found from the equation (see
 (\ref{horosho}))
\begin{equation}
2\widehat \epsilon^2(t) \log (2 \widehat
\epsilon(t))=-e^{-1}\epsilon(t)\label{ura1}
\end{equation}
This gives
\begin{equation}
\widehat\epsilon(t)= \sqrt{\frac{\epsilon(t)}{e\log \epsilon^{-1}(t)
}}\left(1+\bar{o}(1)\right), \quad t\to\infty \label{asr}
\end{equation}
One can write an asymptotical expansion up to any order but we are
not going to need it.

Within $\phi>\pi/2-\phi_0$, we have $\dot y=y\log y, \,
y(0,t)=e^{-1}$ and so
\[
y(\tau,t)=e^{-e^\tau}, \quad \tau<T_1(t)
\]
Then, the value of $T_1(t)$ can be found from
\[
e^{-e^{T_1(t)}}=2\widehat{\epsilon}(t)
\]
or (due to (\ref{ura1}))
\[
2e^{T_1}-T_1=1-\log 2+e^{\varrho t}
\]
That gives us an  asymptotics
\begin{equation}
T_1(t)=\varrho t-\log 2+e^{-\varrho t}(1-2\log 2+\varrho
t)+O(t^2)e^{-2\varrho t}  \label{asii1}
\end{equation}
Therefore, for the actual time $w_1=t+T_1(t)$, we have
\[
w_1=t(1+\varrho)-\log 2+e^{-\varrho t}(1-2\log 2+\varrho
t)+O(t^2)e^{-2\varrho t}
\]
the curve we study will intersect the line $y=2x$ at the point
$(\widehat\epsilon(t),2\widehat\epsilon(t))$. Then, assuming that
$t$ is a function of $w_1$, we have
\[
t(w_1)=\frac{w_1}{1+\varrho} + \frac{\log 2}{1+\varrho} -
\frac{2^{-\delta}(\delta w_1+1-2\log 2+\delta \log
2)}{1+\varrho}e^{-\delta w_1}+O(e^{-(1+\epsilon_3)\delta w_1})
\]
\begin{equation}\label{ohoh}
\delta=\varrho/(1+\varrho)<1,\quad  \epsilon_3>0
\end{equation}
and
\begin{equation}\label{omg}
f_1(w_1)=\widehat\epsilon(t(w_1))=F_1(\varrho)\exp\left(-e^{\delta
w_1}
2^{-1/(\varrho+1)}+\frac{\delta(\delta-1)}{2}w_1\right)(1+\overline{o}(1))
\end{equation}
where $F_1(\varrho)$ can be computed explicitly.


\subsection{The asymptotical form of the curve within
$e^{-1}>y>2x>0$.}

 We first compute the scaling limit of the curve in the
$\left\{|z|<\Bigl(f_1(w_1)\Bigr)^\kappa\right\}$ neighborhood of zero
where $\kappa$ is any small positive number. Fix the time $w_1$ and
solve the system backward in time using auxiliary functions
$\alpha(\tau, w_1), \beta(\tau,w_1)$:
\begin{equation}\label{ode5}
\dot\alpha=\alpha(\log \beta+1),\quad \dot\beta=-\beta\log \beta
\end{equation}
and $\alpha(0)=f_1(w_1), \beta(0)=2f_1(w_1)$. We get then
\begin{equation}\label{sravni}
\beta(\tau,w_1)=\left(2f_1(w_1)\right)^{e^{-\tau}}, \quad
\alpha(\tau,t)=e^\tau \left(f_1(w_1)\right)^{2-e^{-\tau}}
\end{equation}
Take $T$, the moment at which the curve is studied, and let
\[
T=w_1-\tau, \quad \alpha=f_1(T) \widehat \alpha, \quad \beta=2f_1(T)
\widehat \beta
\]
Given $\widehat \alpha$ and $\widehat \beta$, the parameters $w_1$
and $\tau$ are now the functions of $T$, $\widehat \alpha $, and
$\widehat \beta$. We rescale the variables $(x,y)$ as
\[
x=f_1(T)\widehat x, \quad y=f_1(T)\widehat y
\]
The curve we study will go through the point $\widehat x=1, \widehat
y =2$ after this rescaling and this is the normalization we need.
Will the rescaled curve have any limiting behavior? To answer this
question, we consider equations
\[
e^\tau(f_1(w_1))^{2-e^{-\tau}}=f_1(w_1-\tau)\widehat \alpha , \quad
(2f_1(w_1))^{e^{-\tau}}=2f_1(w_1-\tau)\widehat\beta
\]
Denoting $u=\log\widehat\alpha, v=\log\widehat\beta$, we get
\begin{equation}\label{ratioo1}
\displaystyle \frac{v}{u}=\frac{(e^{-\tau}-1)\log 2+e^{-\tau}\log
f_1(w_1)-\log f_1(w_1-\tau)}{(2-e^{-\tau})\log f_1(w_1)+\tau-\log
f_1(w_1-\tau)}
\end{equation}
Now, let $\tau=e^{-\delta w_1}\widehat \xi$ with $\widehat
\xi=\overline{o}(e^{\delta w_1})$. From (\ref{omg}), we immediately
get
\begin{equation}\label{ratioo}
\frac{v}{u}\to -\omega,\quad \omega=\frac{1-\delta}{1+\delta}, \quad
{\rm as }\quad  T\to\infty
\end{equation}
and thus the rescaled curve will converge uniformly to the graph of
the function $\widehat y=2\widehat x^{-\omega}$ on any interval
$\widehat x\in [\widehat a,1]$ with fixed $\widehat a$ (this is the
regime of fixed $\xi$). If $\widehat \xi=\overline{o}(e^{\delta
w_1})$, then the corresponding interval is $\widehat x\in
[f_1^{\varpi(w_1)}(w_1),1]$ (here $\varpi(w_1)$ is a positive function
converging to zero arbitrarily slowly) and on that interval we have
(\ref{ratioo}) uniformly. This implies $\widehat y=2\widehat
x^{-\omega+\overline{o}(1)}$ on that interval for the rescaled curve
we study and the convergence $\overline{o}(1)\to 0$ is uniform.

If one fixes $\tau>0$ in (\ref{ratioo1}) instead (the regime of
$\widehat\xi\sim e^{\delta w_1}$), then
\[
\frac{v}{u}\to \frac{e^{-\tau}-e^{-\delta
\tau}}{2-e^{-\tau}-e^{-\delta \tau}}=-s_-(\tau,\delta), \quad{\rm
as}\, T \to \infty
\]
First, notice that
\[
s_-(\tau,\delta)>0
\]
and $ s_-(\tau,\delta)\to \omega, \tau\to 0$. Now, we see that,
depending on $\tau\geq 0$, we have different zones for $x$ and
different asymptotical regimes
\[
\widehat y=2\widehat x^{-s_-(\tau,\delta)+\overline{o}(1)}
\]
For example, if $\tau\to 0$, the limiting behavior  is again
\begin{equation}
\widehat y=2\widehat x^{-\omega+\overline{o}(1)} \label{sh1}
\end{equation}
Notice now that $\delta\to 0$ as $\varrho\to 0$ and $ \omega\to 1
\quad{\rm if } \quad\delta\to 0 $ so in the $\varrho\to 0$ limit the
limiting shape for $\tau\ll 1$ is hyperbola, which is approximately
the invariant set for $\Lambda_s$. This makes the perfect sense as
the initial point $(e^{-e^{\varrho t}}, e^{-1})$ approaches the
separatrix $OY$ slower and slower when $\varrho\to 0$. Had it been
constant in $t$, the curve would exactly follow the invariant set
for $\Lambda_s$.\smallskip

{\bf Remark 1.} For the distance to the origin we have
\[
\min_{\widehat x>0}
\Bigl(\widehat{x}^2+4\widehat{x}^{-2\omega}\Bigr)= \widehat{x}_m^2+
4\widehat{x}_m^{-2\omega}<\sqrt{1+4}=\sqrt{5}
\]
where
\[
x_m=(4\omega)^{1/(2\omega+2)}
\]

For any fixed $\tau>0$, we computed the asymptotical shape of the
curve but we will also need the bounds on the curve for $\tau(T)\to
+\infty$. In the next section we will need to control the ratio $x/y$ for the points on the curve.
From (\ref{sravni}), we get
\begin{equation}\label{ugolok}
\frac{x}{y}=e^\tau 2^{-e^{-\tau}}(f_1(w_1))^{2(1-e^{-\tau})}< (f_1(T))^{2(1-e^{-\tau_0})}
\end{equation}
for $\tau>2\tau_0>0$. Indeed,  $w_1=T+\tau$ and so $e^\tau (f_1(w_1))^{\epsilon_3}<e^\tau  (f_1(\tau))^{\epsilon_3}\lesssim 1$ for any  $\epsilon_3>0$.

\subsection{The behavior of the curve in the sector $x/2<y<2x$}

 Similarly to (\ref{fact-tr}), we have
\[
z(t,T_2(t))=(2\widehat \epsilon(t),\widehat \epsilon(t))
\]
since the level sets for $\Lambda_s$ are symmetric with respect to
the line $y=x$.

 Let us compute the asymptotics of $T_2(t)-T_1(t)$, i.e.
the time it takes for the trajectory with index $t$ to pass through
the sector. Notice that inside this sector $x\sim y$ and so  we have
the following system of equations
\[
\left\{
\begin{array}{cc}
\dot\alpha=-\alpha(\log \alpha+H_1(\alpha,\beta)), &\alpha(0)=\widehat{\epsilon}(t)\\
\dot\beta=\beta(\log\beta+H_2(\alpha,\beta)) ,& \beta(0)=2\widehat{\epsilon}(t)
\end{array}
\right.
\]
where $H_{1(2)}\sim 1$.
We can find $T_2-T_1$ then from
\[
T_2-T_1=\int_0^{T_2-T_1} \frac{\dot \alpha d\tau}{\alpha (\log \alpha^{-1}-H_1(\alpha,\beta))}
\]
we have
\begin{equation}\label{ocenka}
\log\left(  \frac{\log\widehat\epsilon^{-1}(t)+C}{\log\widehat\epsilon^{-1}(t)+C-\log 2}   \right)<T_2-T_1<\log\left(  \frac{\log\widehat\epsilon^{-1}(t)-C}{\log\widehat\epsilon^{-1}(t)-C-\log 2}   \right)
\end{equation}
and
\[
T_2-T_1=\frac{\log 2}{\log\widehat\epsilon^{-1}(t)}+O(\log^{-2}\widehat\epsilon (t))=(2\log 2) e^{-\varrho t}+O(t e^{-2\varrho t})
\]
So, at the actual time $w_2=t+T_2(t)$,
\[
w_2=t(1+\varrho)-\log 2+e^{-\varrho t}(1+\varrho
t)+O(t^2e^{-2\varrho t})
\]
the trajectory intersects the ray $y=x/2$
at the point $(2\widehat \epsilon(t), \widehat\epsilon(t))$. If one writes
$t$ as a function in $w_2$, then
\[
(2\widehat \epsilon(t), \widehat\epsilon(t))=(2f_2(w_2), f_2(w_2))
\]
with
\[
f_2(w_2)=F_2(\varrho)\exp\left(-e^{\delta w_2}
2^{-1/(\varrho+1)}+\frac{\delta(\delta-1)}{2}w_2\right)(1+\overline{o}(1))
\]
(compare it to (\ref{omg})). The simple calculation yields that
\begin{equation}\label{ravno}
f_2(T)\sim f_1(T)
\end{equation}
One can be more precise here: repeating the calculations
(\ref{sravni}) and doing the estimation similar to (\ref{ocenka}),
one can prove that the curve scaled to go through the point
$\widehat x=1, \widehat y=2$ (i.e., we scale by $f_1(T)$ at any time
$T$) converges uniformly to the graph of the function $\widehat
y=2\widehat x^{-\omega}$. This is achieved again by scaling the
local time $\tau$ as $\tau=e^{-\delta w_1}\widehat \xi$.

\subsection{ The case $0<y<x/2<e^{-1}/2$}

This part of the curve is more complicated but the analysis is
nearly identical to what we did in the first subsection. Consider
equations
\[
\dot\alpha(\tau)=-\alpha(\tau) \log \alpha(\tau), \quad
\dot\beta(\tau)=\beta(\tau)(\log \alpha(\tau)+1)
\]
with initial conditions $\alpha(0)=2f_2(w_2), \beta(0)=f_2(w_2)$. We
are interested in the shape of the curve at time $T=w_2(T)+\tau$
where $w_2(T)$ is the actual time in the past when the trajectory
intersected the ray $y=x/2$.
\begin{equation}\label{abu}
\alpha(\tau)=(2f_2(w_2))^{e^{-\tau}}, \quad
\beta(t)=(f_2(w_2))^{2-e^{-\tau}}e^{\tau}
\end{equation}
We again scale by $f_2(T)$ as follows
\[
\alpha=2f_2(T)\widehat\alpha, \quad\beta=f_2(T)\widehat\beta
\]
and then rescale the variables as $x=f_2(T)\widehat x$ and
$y=f_2(T)\widehat y$. If $u=\log \widehat\alpha, v=\log
\widehat\beta$ then
\[
\frac{v}{u}=\frac{(2-e^{-\tau})\log f_2(w_2)+\tau-\log
f_2(w_2+\tau)}{ (e^{-\tau}-1)\log 2+e^{-\tau}\log f_2(w_2)-\log
f_2(w_2+\tau)}
\]
and, again, if $\tau=\widehat \xi e^{-\delta w_2}$, then
\begin{equation}
\frac{v}{u}\to -\omega
\end{equation}
and thus the curve, rescaled to go through the point $\widehat x=2,
\widehat y=1$, will converge to the graph of the function $\widehat
y=(\widehat x/2)^{-\omega}$ uniformly on any interval $[2,\widehat
b]$ where $\widehat b$ is fixed. For the general case of $\widehat
\xi=\overline{o}(e^{\delta w_2})$ we have $\widehat y=(\widehat
x/2)^{-\omega+\overline{o}(1)}$ for the rescaled curve uniformly on
the interval $\widehat x\in [2, f_2^{-\varpi(w_2)}(w_2)]$ at time
$T=w_2+\tau$.

For $\tau>0$ fixed, we have
\begin{equation}
\frac{v}{u}\to \frac{2-e^{-\tau}-e^{\delta
\tau}}{e^{-\tau}-e^{\delta \tau}}=-s_+(\tau,\delta)
\label{limsh}
\end{equation}
and we again have
\[
s_+(\tau,\delta)\to \omega, \quad {\rm as}\quad \tau\to 0
\]

For any fixed $\tau\in (0, \tau_{cr})$, we have
\[
s_+(\tau,\delta)=-\frac{2-e^{-\tau}-e^{\delta
\tau}}{e^{-\tau}-e^{\delta \tau}}>0
\]
The critical $\tau_{cr}(\delta)=\log \xi_{cr}(\delta)$ where
$\xi_{cr}(\delta)$ is the solution of the equation
\[
2-\frac{1}{\xi}-\xi^{\delta}=0,\quad  \xi>1
\]
Notice that $\xi_{cr}(\delta)\to 1$ as $\delta\to 1$ and
$\xi_{cr}(\delta)\to +\infty$ as $\delta\to 0$. Consequently,
for the critical value
\[
\tau_{cr}(\delta)\to 0, \quad \delta\to 1
\]
and
\[
\tau_{cr}(\delta)\to +\infty, \quad \delta\to 0
\]
As before, for the subcritical value $\tau<\tau_{cr}$, the curve
will have a limiting shape given by
\[
\widehat y=(\widehat x/2)^{-s_+(\tau,\delta)+\overline{o}(1)}
\]
however $s_+(\tau,\delta)$ decays in $\tau$ and
$s_+(\tau_{cr},\delta)=0$.

The $(x,y)$--coordinates of part of the curve corresponding to each
$\tau<\tau_{cr}$ is easy to find. We again have
\[
f_2(T)=F_2(\varrho)\exp\left(-e^{\delta T}
2^{-1/(\varrho+1)}+\frac{\delta(\delta-1)}{2}T\right)(1+\overline{o}(1))
\]
and
\begin{equation}\label{primerno}
x\sim (f_2(T))^{d_2(\tau)+\overline{o}(1)},\quad
d_2(\tau)=e^{-\tau(1+\delta)}<1
\end{equation}
\[
y\sim (f_2(T))^{l_2(\tau)+\overline{o}(1)}, \quad
l_2(\tau)=2e^{-\delta\tau}-e^{-\tau(1+\delta)}
\]
and $l_2(\tau)>1$ for $\tau<\tau_{cr}$. In the $x$--coordinate, the
domain corresponding to $\tau<\tau_{cr}$ will be characterized by
$2f_2(T)<x<f_2^{d_2(\tau_{cr}-\epsilon_4)}(T)$ where
$\epsilon_4<\tau_{cr}-\tau$. Moreover,
\[
d_2(\tau_{cr})\to 0, \quad {\rm as}\quad \delta\to 0
\]
\smallskip
What can be said about the curve for the region with $\tau\geq
\tau_{cr}-\epsilon_4$ where $\epsilon_4$ is small? For the analysis
that follows in the next section we will only need very rough
bounds.

Starting at time $w_2$, the point with coordinates $(2f_2(w_2),f_2(w_2))$ will
go along the trajectory which will then cross the line $x=e^{-1}$ in
time
\[
\tau_{full}=\log\log \frac {1}{2f_2(w_2)}\sim \delta w_2-\frac{\log
2}{1+\varrho}+\bar{o}(1)
\]
and so $ T_3(t)=w_2+\tau_{full}=(1+\delta)(1+\rho)t+
(1+\delta-(\rho+1)^{-1})\log 2+\overline{o}(1) $. Therefore, for the
part of the curve that corresponds to $\tau>\tau_{cr}-\epsilon_4$ at
time $T$, we have
\begin{equation}\label{predel1}
x>(f_2(T))^{d_2(\tau_{cr}-2\epsilon_4)}
\end{equation}
from (\ref{primerno})  and $d_2(\tau_{cr}-2\epsilon_4)<1$. In the
next section, we will be interested in the ratio $y/x$ for every
point on this part of the curve. We use (\ref{abu}) to get
\[
\frac{y}{x}=e^\tau 2^{-e^{-\tau}}(f_2(w_2(T))^{2-2e^{-\tau}}<e^\tau (f_2(w_2(T)))^{\zeta}, \, \zeta=2-2e^{-(\tau_{cr}-\epsilon_4)}, \quad \epsilon_4>0
\]
and
\[
\zeta>0
\]
However, the formulas for $\tau_{full}$ and
$T_3$ indicate that $T<(1+\delta)w_2(T)$. Therefore, part of the
curve corresponding to $\tau>\tau_{cr}-\epsilon_4$ will have
\begin{equation}\label{predel2}
\frac{y}{x}<e^{\tau}(f_2(T/(1+\delta)))^\zeta<(f_2(T/(1+2\delta)))^\zeta
\end{equation}
for large $T$ since $f_2(T)$ decays as double exponential.

\subsection{Self-similar behavior around the origin}

At any given time $T$, denote the point where the curve intersects
the line $y=x$ by $(f(T),f(T))$. The calculations given above show
that
\begin{equation}\label{omg1}
f(T)\sim f_1(T)\sim f_2(T)
\end{equation}
 We also saw that the curve
rescaled by this $f(T)$ (and thus normalized to go through the point
$\widehat x=1, \widehat y=1$) will converge uniformly to the graph
of the function $\widehat y=\widehat x^{-\omega}$. How large is the
domain of convergence? If the curve is calculated at time $T$ and if
$\varpi(T)$ is arbitrary positive function such that $\varpi(T)\to 0$ as
$T\to\infty$, we have
\begin{equation}\label{asima}
\widehat y=\widehat x^{-\omega+\overline{o}(1)}
\end{equation}
for the curve at time $T$ uniformly on
\[
\Bigl(f(T)\Bigr)^{\varpi(T)}<\widehat x<\Bigl(f(T)\Bigr)^{-\varpi(T)}
\]
where $f(T)\sim f_1(T)\sim f_2(T)$ is the rescaling parameter.
Outside this window, we have different scaling limits for
 different values of the parameter $\tau>0$.\smallskip

{\bf Remark 2.} What is the nature of the scaling law for $\tau\sim
0$ that we have got? The Euler dynamics is defined by the
convolution with the kernel
\[
K(z)\sim \log |z|
\]
and $|\log (s\epsilon)|=|\log\epsilon|+O(1)$ for $s\in [1,M]$ with
any fixed $M$. Thus, the strength of the created hyperbolic flow  is
more or less the same within any annulus $\epsilon<|z|<M\epsilon$.
Outside this annulus, say, for $|z|=\sqrt\epsilon$, the size of the
kernel is quite different.

Now, assume that we are given the hyperbolic flow in the whole plane
defined by the following equations
\[
\left\{
\begin{array}{c}
\dot x=y \\
\dot y=x
\end{array}
\right.
\]
One obtains these equations after rotating the phase space in, e.g.,
(\ref{ode5}) by $\pi/4$ degrees.

Let us find the evolution of the contour $\cal{C}(t)$ under this
flow which would be self-similar in the sense that
$\cal{C}(t)=\cal{C}(0)e^{-\delta t}$. Assume that $\cal{C}(t)$ is
given by the graph of the function $y(x,t)$ and then one gets
nonhomogeneous Burgers equation for $y(x,t)$
\[
y_t(x,t)=-y_x(x,t)y(x,t)+x
\]
By our assumptions,
\[
y(x,t)=e^{-\delta t} H(e^{\delta t}x)
\]
so we have
\begin{equation}
\delta(\xi H'(\xi)-H(\xi))=\xi-H'(\xi)H(\xi) \label{oldstory}
\end{equation}

\[ H'=\frac{\delta H+\xi}{\delta \xi+H}
\]
If $H=\xi z(\xi)$, then
\[
|z-1|^A|z+1|^B=C|\xi |^{-1}, A=\frac{1+\delta}{2},
B=\frac{1-\delta}{2}
\]
and so
\[
|H-\xi|^A|H+\xi|^B=C
\]
Due to scaling, we can assume $C=1$. In the coordinates
$H-\xi=\beta, H+\xi=\alpha$, we have
\[
\beta=\alpha^{-(1-\delta)/(1+\delta)}
\]
(compare with (\ref{asima})).

Is it possible to find the initial data such that the evolution of
the curve is self-similar on the larger interval? The answer to this
question is yes. For example, in (\ref{koshi}), one can take
\[
\epsilon(t)=\exp\Bigl(-e^{t^{\widetilde\gamma}}\Bigr)
\]
with $\widetilde\gamma\in (0,1)$. Then, the distance from the curve
to the origin will be $\lesssim
\exp\Bigl(-e^{t^{\widetilde\gamma_1}}\Bigr),$ $\widetilde\gamma_1\in
(0,1)$ but the self-similar behavior will take place in a wider
relative range (but yet not on the whole ball $B_{e^{-1}}(0)$!).
This is a general rule: the slower the Cauchy data jumps from one
invariant set of $\Lambda_s$ to another, the more regular the curve
is around the origin.\bigskip

\section{Comparison of the velocity fields generated by $\Gamma_1(t)$ and by the
limiting configuration. }

In this section, let the symbol $t$ denote the actual time at which
the curve is considered and let $\epsilon=f(t)$ where $f(t)$ was
introduced earlier. Recall that we denoted by $\Gamma_1(t)$ the arc
constructed in the previous section. Let us take $\Gamma_1(t)$ and
close it in the smooth and arbitrary way to produce the
simply-connected domain $\Omega_1(t)$ (see Figure 1). Then,  ${\rm
dist} \Bigl( \Omega_1(t), 0\Bigr)\sim f(t)$. We will compare now the
velocity fields generated by two patches ${E(0.1\epsilon)}\cup{E^-(0.1\epsilon)}$ and
$\Omega_1(t)\cup\Omega_1^{-}(t)$. Recall that $E(\epsilon)$ was introduced in (\ref{sglad}).

 We will
need to use the following lemma in which the picture above will be
rotated by $\pi/4$ degrees in the anticlockwise direction.  Let us
denote by $\Gamma_2(t),\Omega_2(t),$ and $E_2(0.1\epsilon)$ the resulting sets.
Clearly, the calculations done above indicate that the lower part of
$\Gamma_2(t)$ converges uniformly to the graph of the function $|x|$
however it is the precise form of this convergence that will play
the crucial role in comparing the velocity fields.

Fix $t$ and assume that some arc (let us call it $\Gamma_2$ as we
will later apply this lemma to the part of $\Gamma_2(t)$) lies above
the graph of $|x|$ and below the graph of a certain function $g(x)$
defined on $[-a,a]$ where, e.g., $a= 1/(e\sqrt 2)$.\vspace{1cm}

\begin{picture}(102,90)(75,-305)
        \allinethickness{0.254mm}\special{sh 0.3}\path(75,-215)(165,-215)(165,-305)(75,-305)(75,-215) 
\path(75,-305)(350,-305)

\path(75,-305)(75,-205)
        \allinethickness{0.254mm}\cbezier[500](95,-285)(85,-270)(85,-270)(85,-250) 
        \allinethickness{0.254mm}\cbezier[500](95,-285)(105,-295)(105,-295)(120,-295) 
        \allinethickness{0.254mm}\cbezier[500](85,-250)(90,-220)(90,-220)(145,-225) 
        \allinethickness{0.254mm}\cbezier[500](145,-225)(185,-230)(185,-230)(155,-275) 
        \allinethickness{0.254mm}\cbezier[500](155,-275)(140,-290)(140,-290)(120,-295) 
 \put(65,-315){\shortstack{$O$}}
 \put(85,-290){\shortstack{$\Gamma_1$}}
 \put(115,-260){\shortstack{$\Omega_1$}}
 \put(155,-300){\shortstack{$E$}}
\put(360,-305){\shortstack{\bf Figure 1}}
\end{picture}\bigskip\bigskip

\begin{lemma}
Rescale $g(x)$ as follows
\[
\widehat g(\widehat x)=\epsilon^{-1}g(\epsilon \widehat x), \quad |\widehat x|<a/\epsilon
\]
Assume that  $\widehat g$ satisfies the following properties:
\begin{itemize}
\item[(a)]{
 $0<C_1<\widehat g(\widehat x)<C_2, \, \widehat x\in [-1,1]$ with some absolute constants $C_{1(2)}$.}

\item[(b)]{ For $|\widehat{x}|>1$,

\begin{equation}\label{funkh}
|\widehat x|\leq \widehat g\leq |\widehat x|+h(\widehat x), \quad h(\widehat x)=
\overline{o}(|\widehat x|),\quad      \widehat x \to \infty
\end{equation}
}
\end{itemize}
Consider
\[
D(z,t)=\nabla^\perp \Delta^{-1} (\chi_{\Omega_2(t)\cup\Omega^-_2(t)}-\chi_{E_2(0.1\epsilon)\cup E^{-}_2(0.1\epsilon)})
\]
Then, we have the following estimate
\[
|D(z,t)|\lesssim |z|\left(1+\int\limits_{1<|\widehat x|<a/\epsilon}
\frac{h(\widehat x)+h(-\widehat x)}{|\widehat x|\cdot (|\widehat
x-\widehat z_1|+1)|}d\widehat x\right), \quad z=\epsilon \widehat z,
\, \widehat{z}=(\widehat{z}_1,\widehat{z}_2)
\]

\end{lemma}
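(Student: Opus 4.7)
The plan is to exploit the central symmetry of both configurations to reduce the velocity to a kernel--difference integral against the symmetric difference of the patches, and then estimate that integral after rescaling by $\epsilon$. Both $\Omega_2(t)\cup\Omega_2^-(t)$ and $E_2(0.1\epsilon)\cup E_2^-(0.1\epsilon)$ are centrally symmetric, so $D(0,t)=0$ and hence
\[
D(z,t)=\frac{1}{2}\int f(\xi)\left[\frac{(z-\xi)^\perp}{|z-\xi|^2}-\frac{(-\xi)^\perp}{|\xi|^2}\right]d\xi,
\]
where $f=\chi_{\Omega_2\cup\Omega_2^-}-\chi_{E_2(0.1\epsilon)\cup E_2^-(0.1\epsilon)}$. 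By hypothesis, the support of $f$ decomposes into (i) a thin strip between $\Gamma_2(t)$ and the bottom edges of $E_2$, contained in $\{|x|\le y\le g(x)\}$ together with its central reflection, and (ii) an $O(\epsilon)$-size remainder near the origin (the excised disc in $E(0.1\epsilon)$ plus any bounded-geometry pieces that close $\Omega_2(t)$ off away from the strip).

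I then rescale by $\epsilon$: set $\xi=\epsilon\widehat\xi$, $z=\epsilon\widehat z$. Since the kernel difference is homogeneous of degree $-1$ jointly in $(\xi,z)$ and $d\xi=\epsilon^2 d\widehat\xi$, a factor $\epsilon$ is pulled out in front of the integral, and I split the $\widehat\xi$-integration into a near piece $\{|\widehat\xi|\le 2\}$ and a far piece $\{|\widehat\xi|>2\}$. On the near piece the rescaled support of $\widehat f$ has bounded area, and the standard kernel-difference estimate (compare lemma~\ref{shest} and \cite{bm}, pp.~315--318) contributes $\lesssim\epsilon|\widehat z|=|z|$, accounting for the $1$ inside the parentheses of the claim. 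On the far piece I invoke the pointwise bound
\[
\left|\frac{(\widehat z-\widehat\xi)^\perp}{|\widehat z-\widehat\xi|^2}+\frac{\widehat\xi^\perp}{|\widehat\xi|^2}\right|\lesssim \frac{|\widehat z|}{|\widehat\xi|\,|\widehat z-\widehat\xi|},
\]
valid because $|\widehat\xi|$ is uniformly bounded away from $0$ there.

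Finally, I parametrize the far support as $\widehat\xi=(\widehat x,\widehat y)$ with $1<|\widehat x|<a/\epsilon$ and $|\widehat x|\le\widehat y\le|\widehat x|+h(\widehat x)$; the strip's transverse thickness is $h(\widehat x)$ and $|\widehat\xi|\sim|\widehat x|$. Integrating first in $\widehat y$ across this thin strip turns the factor $1/|\widehat z-\widehat\xi|$ into $h(\widehat x)$ times an effective weight bounded by $1/(|\widehat z_1-\widehat x|+1)$; the regularizing $+1$ arises from the geometric fact that the vertical separation $|\widehat y-\widehat z_2|$ is $\gtrsim 1$ whenever $|\widehat x|>2$ and $|\widehat z|\lesssim 1$ (the scale on which the claim is informative). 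Adding the contributions from $\widehat x>1$ and $\widehat x<-1$ yields the symmetrized factor $h(\widehat x)+h(-\widehat x)$ and produces the stated bound. The main technical point is precisely the extraction of the $+1$ regularization from the vertical integration: one has to separate the case of $\widehat z$ near the graph $\widehat y=|\widehat x|$ from the generic one, but no idea beyond the Biot--Savart kernel geometry is needed.
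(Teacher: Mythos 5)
Your proposal follows essentially the same route as the paper's proof: central symmetry gives $D(0,t)=0$, the kernel difference is bounded pointwise by $|z|/(|\xi|\,|z-\xi|)$, and after rescaling by $\epsilon$ the integration across the thin strip of width $h(\widehat x)$ between $\Gamma_2$ and the graph of $|\widehat x|$ produces the stated bound, with the near-diagonal region $|\widehat x-\widehat z_1|\le 1$ handled separately as a uniformly bounded contribution absorbed into the $1$. The only real difference is how the regularizing $+1$ is extracted: the paper uses the projection inequality $|\widehat z-\widehat\xi|\ge|\widehat z_1-\widehat\xi_1|$, which works for every $\widehat z$, whereas your appeal to vertical separation only covers $|\widehat z|\lesssim 1$ and would still need the case split you flag at the end --- so you should adopt the horizontal-projection bound to close that step cleanly.
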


\begin{proof}
Take $z\in \mathbb{C}^+$. From the central symmetry, we have
\[
\nabla^\perp \Delta^{-1} (\chi_{\Omega_2\cup\Omega_2^{-}}-\chi_{E_2(0.1\epsilon)\cup
E_2^-(0.1\epsilon)})(0)=0
\]
Now, let us use the following formula
\[
\left|\frac{\xi-z}{|\xi-z|^2}-\frac{\xi}{|\xi|^2}\right|=\frac{|z|}{|\xi-z|\cdot
|\xi|}
\]
to get
\begin{eqnarray*}
|D(z,t)|\lesssim |z|\left(1+\int\limits_{(E_2(0.1\epsilon)\backslash
\Omega_2)\cap \{|z|<a/10\}}\frac{d\xi}{|z-\xi|\cdot |\xi|}
+\hspace{3cm}\right.
\\
\left. +\int\limits_{(E_2(0.1\epsilon)\backslash \Omega_2)^-\cap
\{|z|<a/10\}}\frac{d\xi}{|z-\xi|\cdot |\xi|} \right)
\end{eqnarray*}
for any $ |z|<a/10$. Scale  by $\epsilon$ and notice
that $|\widehat{z}-\widehat{\xi}|\geq
|\widehat{z}_1-\widehat{\xi}_1|$ (if $\widehat z=(\widehat
z_1,\widehat z_2)$) so the first integral can be estimated by
\[
C\left(1+\int\limits_{1<|\widehat x|<a/\epsilon, |\widehat
x-\widehat z_1|>1}\frac{h(\widehat x)} {|\widehat x|\cdot (|\widehat
x-\widehat{z}_1|+1)}d\widehat x\right)
\]
and the integral over $|\widehat x-\widehat z_1|<1$ is uniformly
bounded due to (\ref{funkh}). For the second integral, we have a
similar bound with $h(-\widehat x)$ by symmetry.
\end{proof}
We immediately get
\begin{corollary}
If the function $h$ in (\ref{funkh}) satisfies
\begin{equation}
|h(\widehat x)|\lesssim |\widehat x|^{-\gamma}, \quad
\gamma>0\label{c1}
\end{equation}
for $|\widehat x|<\epsilon^{-\alpha}, \,\alpha\in (0,1)$ and
\begin{equation}
|\ln \epsilon| \max_{\epsilon^{-\alpha}<|\widehat x|<a/\epsilon}
\left|\frac{h(\widehat x)}{\widehat x}\right|<C\label{c2}
\end{equation}
then
\[
|D(z,t)|\lesssim |z|, \quad |z|\gtrsim \epsilon
\]
\end{corollary}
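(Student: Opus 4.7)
The plan is to apply the preceding lemma and reduce the corollary to showing that
\[
I(\widehat z_1) := \int_{1<|\widehat x|<a/\epsilon} \frac{h(\widehat x)+h(-\widehat x)}{|\widehat x|\,(|\widehat x - \widehat z_1|+1)}\,d\widehat x
\]
is bounded by a constant independent of both $\epsilon$ and $\widehat z_1$. I would split $I$ at the natural break point $|\widehat x| = \epsilon^{-\alpha}$, which is exactly the transition between the regimes covered by hypotheses (\ref{c1}) and (\ref{c2}).

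In the inner region $1 < |\widehat x| < \epsilon^{-\alpha}$, I would apply (\ref{c1}) to bound $|h(\pm\widehat x)| \lesssim |\widehat x|^{-\gamma}$ and then simply drop the harmless factor $(|\widehat x - \widehat z_1|+1)^{-1} \leq 1$. The integrand is then controlled by $|\widehat x|^{-1-\gamma}$, and since $\gamma > 0$ the tail integral $\int_{|\widehat x|>1} |\widehat x|^{-1-\gamma}\,d\widehat x$ converges absolutely, giving a bound uniform in $\epsilon$ and $\widehat z_1$. In the outer region $\epsilon^{-\alpha} < |\widehat x| < a/\epsilon$, hypothesis (\ref{c2}) provides $|h(\pm\widehat x)/\widehat x| \lesssim 1/|\ln\epsilon|$ pointwise; pulling this factor outside the integral leaves $\int d\widehat x/(|\widehat x - \widehat z_1|+1)$ taken over an interval of total length at most $2a/\epsilon$. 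A change of variables $u = \widehat x - \widehat z_1$ shows this latter integral is at most $2\ln(1+a/\epsilon) \lesssim |\ln\epsilon|$, regardless of where $\widehat z_1$ sits. The two factors $1/|\ln\epsilon|$ and $|\ln\epsilon|$ cancel, so this piece is $O(1)$ as well.

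Combining the two contributions yields $I(\widehat z_1) = O(1)$ uniformly, and the lemma then gives $|D(z,t)|\lesssim |z|$ on the prescribed range $|z|\gtrsim\epsilon$ (the lower bound on $|z|$ being inherited from the geometric setup of the patches near the cutoff scale). The only nontrivial mechanism is the balance in the outer region, where the logarithmic length of the integration interval is matched exactly by the logarithmic decay built into (\ref{c2}); this is precisely why (\ref{c2}) is formulated with the weight $|\ln\epsilon|$ rather than any faster-growing factor. No delicate cancellation or geometric information about $\widehat z_1$ is exploited beyond the one-dimensional log estimate, so there is no substantial obstacle once the splitting is made.
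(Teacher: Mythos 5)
Your proposal is correct and follows essentially the same route as the paper: split the integral at $|\widehat x|=\epsilon^{-\alpha}$, use (\ref{c1}) and the convergence of $\int|\widehat x|^{-1-\gamma}d\widehat x$ on the inner piece, and on the outer piece pull out $\max|h(\widehat x)/\widehat x|$ against the logarithmic bound $\int d\widehat x/(|\widehat x-\widehat z_1|+1)\lesssim|\ln\epsilon|$. The only cosmetic difference is that the paper records the sharper bound $(|\widehat z_1|+1)^{-1}$ for the inner piece, whereas you settle for $O(1)$, which is all the corollary requires.
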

\begin{proof}
From (\ref{c1})
\[
\int_{1<|\widehat x|<\epsilon^{-\alpha}}\frac{h(\widehat x)}{|\widehat x|\cdot
(|\widehat x-\widehat z_1|+1)}d\widehat x\lesssim (|\widehat z_1|+1)^{-1}
\]
and (\ref{c2}) yields
\[
\int_{\epsilon^{-\alpha}<|\widehat x|<a/\epsilon}\frac{h(\widehat
x)}{|\widehat x|\cdot (|\widehat x-\widehat z_1|+1)}d\widehat x\lesssim |\ln
\epsilon| \max_{\epsilon^{-\alpha}<|\widehat x|<a/\epsilon}
\left|\frac{h(\widehat x)}{\widehat x}\right|<C
\]
\end{proof}
Now, let us apply this corollary to our situation. For this, we will use the results from
the previous section.\smallskip

{\bf Remark 1.} Notice that for each $\varrho$, the arc $\Gamma_1(t)$ satisfies the conditions
of the corollary with some $\alpha(\varrho)$. For $|\hat x|>\epsilon^{-\alpha}$, we can use
(\ref{ugolok}) and (\ref{predel2}) to get
\[
\left|\frac{h(\hat{x})}{\hat{x}}\right|\lesssim \Bigl(f(t/(1+2\delta))\Bigr)^\zeta,
\quad \zeta>0
\]
and
\[
\Bigl| \Bigl(\log f(t)\Bigr)
f(t/(1+2\delta))^\zeta\Bigr|\to
0
\]
Thus, we have
\begin{equation}\label{fact1}
\sup_{z} \left| \frac{D(z,t)}{|z|} \right|<C
\end{equation}

{\bf Remark 2. } If one repeats the estimates in lemma above, we have
\begin{equation}\label{nol}
|\nabla^\perp \Delta^{-1} (\chi_{E_2\cup E_2}-\chi_{E_2(0.1\epsilon)\cup E^{-}_2(0.1\epsilon)})|\lesssim |z|, \quad {\rm if }\quad|z|\gtrsim \epsilon
\end{equation}
\bigskip

\section{Construction of the vortex patch dynamics and proof of the main theorem.}

We first construct an  incompressible strain $\Psi$ which satisfies
the following properties  (see Figure 2 for the upper part of the
actual picture):

1. $\Psi$ is odd and is compactly supported.

2. Around the points $(4,4)$ and $(-2,2)$ it is the standard hyperbolic time-independent
flow (these are the domain $D_1$ and $D_6$). At $(-2,2)$, we choose
the separatrices to be $y_1=-x$ and $y_2=2+(x+2)$. The flow is attracting along $y_2$
and is repelling along $y_1$. To define the curve $\Gamma(t)$ in $D_1$ we need the following
result which will later guarantee necessary
initial conditions for the dynamics around $(0,0)$. Let the local
coordinates near $(-2,2)$ be denoted by $(\xi,\eta)$.

\begin{lemma}\label{ic}
Fix any $\varrho>0$ and consider the standard hyperbolic dynamics around the origin
\[
\left\{
\begin{array}{cc}
\dot \xi=\xi, & \xi(0)=\xi_0\\
\dot \eta=-\eta, & \eta(0)=\eta_0
\end{array}
\right.
\]
Let $G(\xi)=0$ for $\xi\leq 0$ and
$G(\xi)=\xi^{-1}\exp(-\xi^{-\varrho})$ for $\xi>0$. Consider the
evolution of the smooth curve $\Gamma(0)=\{(\xi,G(\xi)), |\xi|<1\}$
under this flow. Call it $\Gamma(t)=\{(\xi,G(\xi,t)), |\xi|<1\}$.
Then,
\begin{equation}
G(1,t)= e^{-e^{\varrho t}}\label{zapas2}
\end{equation}
\end{lemma}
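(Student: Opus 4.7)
The plan is to solve the hyperbolic ODE explicitly and then track a single Lagrangian trajectory, namely the one whose $\xi$-coordinate reaches $1$ at time $t$.

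First I would integrate the system to get the flow map
\[
\Phi_t(\xi_0,\eta_0) = (\xi_0 e^{t},\, \eta_0 e^{-t}).
\]
Since the curve $\Gamma(t)$ is the image $\Phi_t(\Gamma(0))$, a point $(\xi,G(\xi,t))\in\Gamma(t)$ comes from a unique initial point $(\xi_0,G(\xi_0))\in\Gamma(0)$ with $\xi_0 = \xi e^{-t}$, and its new $\eta$-coordinate is $G(\xi_0)e^{-t}$. Hence the graph is preserved and one has the closed formula
\[
G(\xi,t) = e^{-t}\, G(\xi e^{-t}).
\]

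Specializing to $\xi=1$, the required preimage is $\xi_0 = e^{-t}>0$, so the explicit expression for $G$ on the positive axis may be substituted. This gives
\[
G(1,t) = e^{-t}\, G(e^{-t}) = e^{-t}\cdot e^{t}\cdot \exp\!\bigl(-(e^{-t})^{-\varrho}\bigr) = \exp\!\bigl(-e^{\varrho t}\bigr),
\]
which is exactly \eqref{zapas2}.

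There is really no obstacle here: the only thing to check is that the flow preserves the graph structure (true because $\Phi_t$ is a diagonal linear map), and the rest is a one-line substitution. The reason the peculiar initial profile $G(\xi)=\xi^{-1}\exp(-\xi^{-\varrho})$ was chosen is precisely so that, after the prefactor $\xi_0$ cancels with the $e^{-t}$ coming from the contracting direction, only the clean double-exponential $\exp(-e^{\varrho t})$ remains. This supplies the input data needed by the Cauchy problem in Section~\ref{cauchy} (cf.\ \eqref{koshi}) with the correct parameter $\varrho$.
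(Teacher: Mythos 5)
Your proof is correct and is essentially the same as the paper's: both track the trajectory starting at $(e^{-t},G(e^{-t}))$, which the diagonal flow carries to $(1,e^{-t}G(e^{-t}))=(1,e^{-e^{\varrho t}})$. The extra observation that the graph structure is preserved, $G(\xi,t)=e^{-t}G(\xi e^{-t})$, is a harmless elaboration of the same one-line computation.
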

\begin{proof}
This is a straightforward calculation. The point $(\xi_0,\eta_0)$
moves to $(\xi_0 e^t, \eta_0 e^{-t})$ in time $t$. Thus, the point
$(e^{-t}, G(e^{-t}))\in \Gamma(0)$ will move to $(1, e^{-e^{\varrho
t}})$ in time $t$.
\end{proof}
{\bf Remark 1.} Notice that the part of $\Gamma(t)$ that belongs to
the left half-plane does not change in time. Within the window
$|\xi|<1$, the curve $\Gamma(t)$ is always smooth and converges to the coordinate axis.\smallskip

3. Around the origin (domain $D_2$), we choose
\begin{equation}\label{rota}
\Psi(z,t)=\nabla^{\perp} \Lambda_s^{(t)}\left(\frac{y+x}{\sqrt
2},\frac{y-x}{\sqrt 2},t\right)
\end{equation}
where $\Lambda_s^{(t)}$ is $\Lambda_s$ modified in the $0.1 f(t)$--neighborhood of zero:
\[
\Lambda_s^{(t)}(z)=xy\log \Bigl(0.1f(t)Q_1\left(    \frac{z}{0.1 f(t)}  \right)\Bigr)
\]
where $Q_1(z)$ is smooth, positive, and $Q_1(z)=|z|\Phi(\phi)$ for $|z|>1$.
Clealry, $\Lambda_s^{(t)}=\Lambda_s$ for $|z|>0.1f(t)$ so the dynamics of the curve considered in the section \ref{cauchy} would be the same had we
studied the flow generated by the potential $\Lambda_s^{(t)}$ instead. One can also easily check that
\begin{equation}\label{chet}
\nabla^\perp \Lambda_s^{(t)}(z)=(-x,y)\log f(t) +O(|z|)
\end{equation}
for $|z|\lesssim f(t)$. We changed the coordinates in (\ref{rota})
as we want to rotate the picture described in section 3 by $\pi/4$
in the positive direction. We also modified the value of the
potential in the $0.1f(t)$ neighborhood of the origin to get rid
of the artificial singularity generated by the sharp corner in the
limiting configuration.\smallskip

4.  Between $D_1$ and $D_2$ the potential can be smoothly
interpolated. \smallskip

5. In $D_{3(5)}$, the flow is laminar with direction perpendicular
to the black segments and in the north-eastern direction. \smallskip

6. The potential between zones $D_2$ and $D_3$ can be
smoothly interpolated, as well as the potential between $D_5$ and
$D_6$. In the zone $D_7$, the potential is zero so the curve is
frozen. This zone again is interpolated smoothly between $D_1$ and
$D_6$.\smallskip

7. In the zone $D_4$, we construct non-stationary potential in the
following way (only in this zone the flow is essentially
time-dependent!). We need an  argument that allows an interpolation
between two laminar flows and guarantees the prescribed evolution of
the curve $\Gamma(t)$ in these laminar zones.  What we want is to
define dynamics in the regions $D_3,D_4,D_5$ right after the points
on the curve leave $D_2$. We need to define this dynamics in such a
way that the motion of $\Gamma(t)$ is localized to these regions
and, moreover, that it does not move in $D_5$. Once again, in $D_3$
and $D_5$ we postulate the flow to be laminar and then we want to
define it in $D_4$. We will do that in the local coordinates.\vspace{1cm}

\setlength{\unitlength}{0.254mm}
\begin{picture}(555,418)(30,-430)
        \allinethickness{0.254mm}\special{sh 0.3}\put(230,-350){\ellipse{160}{160}} 
        \allinethickness{0.254mm}\special{sh 0.3}\put(97,-227){\ellipse{135}{135}} 
        \allinethickness{0.254mm}\put(230,-420){\vector(0,1){145}} 
        \allinethickness{0.254mm}\put(155,-355){\vector(1,0){150}} 
        \allinethickness{0.254mm}\path(230,-355)(100,-230) 
        \allinethickness{0.254mm}\path(65,-265)(145,-190)\special{sh 1}\path(145,-190)(140,-193)(141,-194)(142,-195)(145,-190) 
        \allinethickness{0.254mm}\path(65,-195)(140,-270)\special{sh 1}\path(140,-270)(135,-267)(136,-266)(137,-265)(140,-270) 
        \allinethickness{0.254mm}\path(230,-355)(280,-310) 
        \allinethickness{0.254mm}\special{sh 0.3}\path(295,-160)(430,-160)(430,-295)(295,-295)(295,-160) 
        \allinethickness{0.254mm}\special{sh 0.3}\put(480,-115){\ellipse{100}{100}} 
        \allinethickness{0.254mm}\path(445,-145)(510,-80)\special{sh 1}\path(510,-80)(505,-83)(506,-84)(507,-85)(510,-80) 
        \allinethickness{0.254mm}\path(515,-145)(445,-80)\special{sh 1}\path(445,-80)(448,-85)(449,-84)(450,-83)(445,-80) 
        \allinethickness{0.254mm}\path(300,-235)(360,-290) 
        \allinethickness{0.254mm}\path(365,-165)(425,-220)

 \put(50,-233){\shortstack{$(-2,2)$}} %
 \put(90,-286){\shortstack{$D_1$}} 
       \put(215,-369){\shortstack{$O$}}

\put(190,-297){\shortstack{{\large
$\Gamma(t)$}}}

\put(230,-150){\shortstack{{\Huge
$\Omega(t)$}}}

        \put(245,-416){\shortstack{$D_2$}} 
        \put(335,-291){\shortstack{$D_3$}} 
        \put(410,-256){\shortstack{$D_4$}} 
        \put(410,-196){\shortstack{$D_5$}} 
        \put(505,-121){\shortstack{$D_6$}} 
        \put(15,-440){\shortstack{\bf Figure 2}}
 \put(435,-116){\shortstack{$(4,4)$}}
\put(240,-51){\shortstack{$D_7$}} 
        \allinethickness{0.1mm}\cbezier[1000](100,-230)(35,-165)(30,-165)(40,-125) 
        \allinethickness{0.1mm}\cbezier[1000](40,-125)(60,-50)(115,-45)(235,-20) 
        \allinethickness{0.1mm}\cbezier[1000](235,-20)(375,-5)(375,-10)(485,-35) 
        \allinethickness{0.1mm}\cbezier[1000](485,-35)(540,-50)(545,-50)(505,-85) 
        \allinethickness{0.1mm}\cbezier[1000](505,-85)(435,-155)(430,-160)(400,-190) 
        \allinethickness{0.1mm}\cbezier[1000](400,-190)(385,-205)(390,-200)(385,-205) 
        \allinethickness{0.1mm}\cbezier[1000](385,-205)(375,-215)(375,-220)(380,-230) 
        \allinethickness{0.1mm}\cbezier[1000](380,-230)(380,-240)(380,-240)(325,-245) 

        \allinethickness{0.1mm}\cbezier[1000](325,-245)(310,-250)(305,-255)(305,-255) 

        \allinethickness{0.1mm}\cbezier[1000](305,-255)(265,-295)(260,-295)(220,-305) 
        \allinethickness{0.1mm}\cbezier[1000](220,-305)(195,-305)(195,-305)(140,-265) 
        \allinethickness{0.1mm}\cbezier[1000](140,-265)(110,-240)(115,-240)(100,-230) 
\end{picture}\bigskip\bigskip

Assume that potential $\Lambda(z)=-y$ in $B=\{z: -1<x<0\}\cup \{z:
1<x<2\}\sim D_3\cup D_5$. This potential generates the laminar flow
\[
\dot\theta=\nabla \theta\cdot \nabla^\perp\Lambda
\]
where $\nabla^\perp\Lambda(z)=(-1,0)$. We want to define smooth
$\Lambda(z,t)$ in $D_4=\{z:0<x<1\}$ such that the resulting
$\Lambda(z,t)$ is smooth globally on $D_3\cup D_4\cup D_5$.
Moreover, given smooth decaying  $\nu(t)$ (e.g., $\nu\in
L^1(\mathbb{R}^+)$ is enough for decay condition), we need to define
a curve $\Gamma(0)=\{(x,\gamma(x,0))\}$ that evolves under this flow
$\Gamma(t)=\{(x,\gamma(x,t))\}$ such that $\gamma(0,t)=\nu(t)$ and
$\gamma(1,t)=0$. This function $\nu(t)$ is determined by $\Gamma(t)$
in the zone $D_2$ where it approaches the separatrix in the double
exponential rate. To be more precise, $\nu$ is proportional to the
distance from $\Gamma(t)$ to this separatrix in the area where $D_2$
and $D_3$ meet.

We will look for
\[
\Lambda(z,t)=-y-g_1(x)g_2(x-t)
\]
where $g_{1(2)}$ are smooth. Then, to guarantee the global
smoothness, we need $g_1(x)=0$ around $x=0$ and $x=1$. Now, take a
point $(0, \nu(T))$ and trace its trajectory for $t>T$. We have
\[
x(t,T)=t-T,\, t\in [T,T+1]
\]
and
\begin{eqnarray*}
y(t,T)=\nu(T)-\int_T^t
\Bigl(g_1'(\tau-T)g_2(\tau-T-\tau)+g_1(\tau-T)g_2'(\tau-T-\tau)\Bigl)d\tau,\\
t\in [T,T+1]
\end{eqnarray*}
Since we want $y(T+1,T)=0$ and $g_1$ to vanish on the boundary,
\[
\nu(T)=g_2'(-T)\int_T^{T+1} g_1(\tau-T)d\tau=g_2'(-T)\int_0^1
g_1(x)dx
\]
and this identity should hold for all $T>0$. Take any $g_1$ with
mean one, this defines $g_2$ on the negative half-line as long as
 we set $g_2(-\infty)=0$. We can continue it now to the whole line in a
smooth fashion to have $g_2$ globally defined. How do we define the
initial curve at $t=0$? We extend smooth $\nu(t)$ to $t\in [-1,0]$
arbitrarily and apply the procedure explained above to $t\in
[-1,\infty)$. The curve that we see at $t=0$ will be the needed
initial value for the dynamics that starts at $t=0$. It is only left
to mention that to localize the picture in the vertical direction we
can multiply $\Lambda(z,t)$ be a suitable cut--off in the $y$
direction. \bigskip

The part of the curve that is in $D_5$, $D_6$, $D_7$, and the
north-western part of $D_1$ is stationary, it does not move at all
(this is easy to ensure by  making this part of the curve the level
set of the stationary potential $\Lambda(z), \Psi=\nabla^\perp
\Lambda$). For the rest of the curve, it does change in time and the
flow is directed along it in the anti-clockwise direction.

Now that the explicit $\Psi(z,t)$ and the curve $\Gamma(t)=\partial\Omega(t)$ evolving under
this flow are defined, we are ready to prove theorem \ref{main}.
\begin{proof}{\it (Theorem \ref{main}).}
We have by construction
\[
\dot\theta=\nabla\theta\cdot \Psi(z,t)
\]
and $\theta(z,t)=\chi_{\Omega(t)}(z)+\chi_{\Omega^{-}(t)}(z)$. Let us define $S(z,t)$ by
\[
S(z,t)=\Psi(z,t)-\pi \nabla^\perp \Delta^{-1}\theta
\]
To show that this difference satisfies (\ref{lip}) we only need to consider the behavior of $\theta$ around $z=0$ since
 the contribution
from $(\Omega(t)\cup\Omega^-(t))\backslash B_{0.1 a}(0)$ to
$\nabla^\perp \Delta^{-1}\theta$ is of order $O(|z|)$ as immediately follows from the central symmetry.
We write 
\[
\Psi(z,t)-\pi \nabla^\perp \Delta^{-1}\theta=\Psi(z,t)-I+I-\pi \nabla^\perp \Delta^{-1}\theta
\]
where $I$ is obtained by replacing $\Omega(t)$ around the origin by $E_2(0.1f(t))$ configuration. Then, 
$
|I-\pi \nabla^\perp \Delta^{-1}\theta|\lesssim |z|
$
follows from (\ref{fact1}). In the $f(t)$--neighborhood of the origin, $\Psi(z,t)-I$ is at most $C|z|$ due to  
(\ref{sem}) and (\ref{chet}). For $|z|\gtrsim f(t)$, we can use (\ref{sbg}) and (\ref{nol}) to show that $|\Psi(z,t)-I|\lesssim |z|$. 
Thus, we have (\ref{lip}).  The uniform log-Lipschitz condition
 (\ref{log-lip}) immediately follows as well since the velocity generated by any patch does satisfy it.
The error $S(z,t)$ will in fact be much
smaller than $\Psi(z,t)$ around the origin and so can be considered as a small error or correction. It is odd as $\theta$ is even and is also
divergence free as the difference of two divergence free vector
fields. We get
\begin{equation}\label{e1}
\dot\theta=\nabla \theta\cdot \Bigl(\pi \nabla^\perp \Delta^{-1} \theta+S(z,t)\Bigr)
\end{equation}
and the theorem is proved as the dynamics of $\Omega(t)$ satisfies
\[
{\rm dist}(\Omega(t),\Omega^-(t))= 2{\rm
dist}(\Omega(t),0)\sim f(t)
\]
For $f(t)$ we have a double exponential decay due to (\ref{omg}) and (\ref{omg1}).
\end{proof}
{\bf Remark 2.} Strictly speaking, the Cauchy data for the evolution
in the domain $D_2$ will not be given by (\ref{zapas2}) as the flow
will distort it when moving between zones $D_1$ and $D_2$. However,
this leads only to a minor change (a fixed time increment, in fact),
so the same argument goes through.

{\bf Remark 3.} The parameter $\delta$ in the formulation of the
theorem \ref{main} corresponds to $\delta$ in (\ref{ohoh}) and can
be chosen arbitrarily from the interval $(0,1)$. The size of this
interval is determined by the parameters of the problem: the value
of vorticity and  initial size of the patches.\bigskip

\section{Appendix: approximate self-similar solution to the contour dynamics}

In this section, we address the following question: is it possible
to construct an approximate solution to the Euler dynamics of
patches such that the self-similarity will persist on the larger
set? We will do that in a rather artificial way as we already can
make the right guess about what the solution should be. The construction presented here  gives an independent (and even shorter) proof of the double-exponential merging but it does not explain the mechanism of the singularity formation and is less illuminating in our opinion.

\smallskip

Assume that the boundary of the simply-connected patch is
parameterized by $\gamma(s,t)$. Then, the velocity at every point of
the contour can be computed by (\cite{dc1}, formula (1))
\[
u(\gamma(\xi,t),t)=C\int_0^{2\pi} \log
|\gamma(\xi,t)-\gamma(s,t)|\gamma'_s(s,t)ds
\]
This is a simple corollary of the Gauss integration formula.

If we have a centrally symmetric pair of vortices interacting with each other and, like before,
 the part of $\Gamma(t)$ close to the origin can be parameterized by the function $y(x,t)$,
 then the equation for evolution reads

\begin{eqnarray}
\dot y(x,t)=C\int_{-0.5}^{0.5}
(y'(x,t)-y'(\xi,t))\log\left(\frac{(x-\xi)^2+(y(x,t)-y(\xi,t))^2}
{(x+\xi)^2+(y(x,t)+y(\xi,t))^2} \right)d\xi  \nonumber\\
 +r(x,y,t) \hspace{1cm}\label{for1}
\end{eqnarray}
where $r(x,y,t)$ is a contribution from those parts of $\Gamma(t)$
and $\Gamma^{-}(t)$ that are away from the origin. This $r(x,y,t)$
is therefore smooth and $r(0,0,t)=0$ by symmetry. Let us drop
$r(x,y,t)$ and try to find an approximate self-similar solution? In
other words, we want
 $y(x,t)=\epsilon(t)\phi(x/\epsilon(t))$ to satisfy (\ref{for1}) up to some smaller order
 correction. Substitution into (\ref{for1}) gives
\begin{equation}\label{back}
\frac{\dot\epsilon}{\epsilon}\left(\phi(\widehat{x})-\phi'(\widehat{x})
\widehat{x}\right)=\hspace{4cm}
\end{equation}
\[
C\int\limits_{-0.5\epsilon^{-1}}^{0.5\epsilon^{-1}}
(\phi'(\widehat{x})-\phi'(\widehat{\xi}))\log\left(\frac{(\widehat{x}-\widehat{\xi})^2+
(\phi(\widehat{x})-\phi(\widehat{\xi}))^2}
{(\widehat{x}+\widehat{\xi})^2+(\phi(\widehat{x})+\phi(\widehat{\xi}))^2}
\right)d\widehat{\xi}+E(\widehat x,t)
\]
where $E$ is an error we will control later on. Let us rewrite the
integral as follows (up to a constant multiple)
\[
\int_{-0.5\epsilon^{-1}}^{0.5\epsilon^{-1}}
(\phi'(\widehat{x})-\phi'(\widehat{\xi}))(\widehat x\widehat
\xi+\phi(\widehat x)\phi(\widehat \xi)) K(\widehat x,\widehat
\xi)d\widehat \xi
\]
with
\begin{eqnarray*}
K(\widehat x,\widehat \xi)=
\frac{1}{b-a}\int_a^b \frac{d\eta}{\eta},\hspace{4cm} \\
a=(\widehat x-\widehat \xi)^2+(\phi(\widehat x)-\phi(\widehat
\xi))^2, \quad b=(\widehat x+\widehat \xi)^2+(\phi(\widehat
x)+\phi(\widehat \xi))^2
\end{eqnarray*}
and so we get
\begin{equation}
k_1(\widehat x,t)\phi'(\widehat x)\widehat x +k_2(\widehat
x,t)\phi'(\widehat x)\phi (\widehat x)+k_3(\widehat x,t)\widehat
x+k_4(\widehat x,t)\phi(\widehat x)\label{e5}
\end{equation}
where the coefficients $k_j$ are defined correspondingly. Assume now
that $\phi$ satisfies the following assumptions:
\begin{itemize}
\item[(a)]{
 $\phi(\widehat x)$ is smooth}

\item[(b)]{ $0<C_1<\phi(\widehat x)<C_2$ for $\widehat x\in [-1,1]$}

\item[(c)]{
$\phi(\widehat x)=|\widehat x|+\rho(\widehat x)$ where $|\widehat
x|>1$ with
\[
|\rho(\widehat x)|<|\widehat x|^{-\gamma}, \quad |\rho'(\widehat
x)|<|\widehat x|^{-\gamma},\quad  \gamma>0 \]}
\end{itemize}
Let us estimate the coefficients $k_j$ now. We will handle $k_3$,
the analysis for $k_2$ is similar.
\begin{equation}\label{aniz}
k_3(\widehat x)=-\frac 14\int_{-0.5\epsilon^{-1}}^{0.5\epsilon^{-1}}
\frac{\xi \phi'( \xi)}{\xi \widehat x+\phi(\xi) \phi(\widehat x)}
\left(\int_a^b \frac{d\eta}{\eta}\right)d\xi
\end{equation}
Consider $\widehat x\in [1,0.5 \epsilon^{-1}]$, the other values can
be treated similarly. For the integral over the positive $\xi$ we
have (after the change of variables $\xi=\widehat x\xi_1$, recall
that $x=\epsilon \widehat x$)
\[
I_1=-\int\limits^{0.5x^{-1}}_{0} \frac{\xi_1 (1+\rho'(\xi_1 \widehat
x)) }{\xi_1+(1+\rho(\widehat
x)/\widehat{x})(\xi_1+\rho(\widehat{x}\xi_1)/\widehat{x}) }\cdot A\,
d\xi_1
\]
\[
A=\frac 14 \int_{(\xi_1-1)^2+(1-\xi_1+\rho(\widehat x)/{\widehat
x}-\rho(\widehat x\xi_1)/\widehat
x)^2}^{(1+\xi_1)^2+(1+\xi_1+\rho(\widehat x)/\widehat x+
\rho(\widehat x \xi_1 )/\widehat x)^2}\frac{d\eta}{\eta}
\]
For $A$, we have a representation
\[
A=\frac{1}{\xi_1}+\frac{\rho(\widehat x)}{2\widehat
x\xi_1}+O(\xi_1^{-2}), \quad \xi_1> 1
\]
so
\[
I_1=-\frac 12 \int\limits_{1}^{0.5 x^{-1}} \frac{d\xi_1}{\xi_1}
\Bigl(1+        O\left( \frac{\rho(\widehat x\xi_1)}{\widehat
x\xi_1} \right) \Bigr)\Bigl(1+\rho'(\xi_1 \widehat x)\Bigr)+\ldots
\]
\[
=0.5\log x+{O}(1)
\]
For the other integral, changing the sign in integration
\[
I_2=-\int\limits^{0.5\epsilon^{-1}}_{0} \xi (1+\rho'(-\xi
))\frac{1}{b-a}\int_a^b \frac{d\eta}{\eta} d\xi
\]
Here, we have
\begin{eqnarray*}
b=(\widehat x-\xi)^2+(\widehat x+\xi+\rho(\widehat
x)+\rho(-\xi))^2,\,\, a=(\widehat x+\xi)^2+(\widehat
x-\xi+\rho(\widehat x)-\rho(-\xi))^2
\end{eqnarray*}
As both $\widehat x$ and $\xi$ are large in the interesting regime,
we are in the situation when
\[
a, b>(\widehat x^2+\xi^2)/2
\]
so we can use the mean-value formula
\[
\frac{1}{b-a}\int_a^b
\frac{d\eta}{\eta}=\frac{1}{b}+\frac{b-a}{2\eta_1^2}, \quad
\eta_1\in (a,b)
\]
Substituting, we have two terms: $I_2=-(T_1+T_2)$.

\[
T_1=\int_0^{0.5x^{-1}} \xi_1(1+\rho'(-\widehat x\xi_1)) B^{-1}
d\xi_1
\]
where
\begin{eqnarray*}
B=2(1+\xi_1^2)+2(1+\xi_1)(\rho(\widehat x)/\widehat x+
\rho(-\widehat x\xi_1)/\widehat x)\\
+(\rho(\widehat x)/\widehat x+\rho(-\widehat x\xi_1)/\widehat x)^2
\end{eqnarray*}
Thus,
\[
T_1=\int_0^{0.5x^{-1}} \frac{\xi_1(1+\rho'(-\widehat
x\xi_1))}{2(1+\xi_1^2)}d\xi_1+{O}(1)=-0.5 \log x+{O}(1)
\]
For the other term, we have
\[
|T_2|\lesssim \int_0^{0.5\epsilon^{-1}} \xi \frac{\widehat
x|\rho(-\xi)|+\xi|\rho(\widehat x)|+ |\rho(\widehat
x)\rho(-\xi)|}{(\widehat x^2+\xi^2)^2}d\xi
\]
\[
\lesssim \int_1^{0.5\epsilon^{-1}} \frac{\xi d\xi}{(\widehat
x^2+\xi^2)^2} \left( \frac{\widehat
x}{\xi^\gamma}+\frac{\xi}{{\widehat x}^\gamma}+1\right)d\xi<C
\]
Combining all terms, we have
\[
k_3=\log x+{O}(1),\quad x>\epsilon
\]
For $x\sim 0$, we get $I_{1(2)}=0.5\log \displaystyle
\epsilon+O(1)$. These calculations show that
\begin{equation}
k_3=\left\{
\begin{array}{cc}
\log |x|+O(1),& \quad |x|>\epsilon\\
\log \epsilon+O(1),& \quad |x|<\epsilon
\end{array}
\right.
\end{equation}
Analogous estimates can be obtained for $k_2$. They yield
\begin{equation}
k_2= -\left\{
\begin{array}{cc}
\log |x|+O(1),& \quad |x|>\epsilon\\
\log \epsilon+O(1),& \quad |x|<\epsilon
\end{array}
\right.
\end{equation}
The estimates for other terms are
\[
|k_{1(4)}|=O(1)
\]
Indeed,
\[
k_1=\int_{-0.5\epsilon^{-1}}^{0.5\epsilon^{-1}}  \xi K(\widehat
x,\xi)d\xi, \quad k_4=-\int_{-0.5\epsilon^{-1}}^{0.5\epsilon^{-1}}
\phi'(\xi) \phi(\xi) K(\widehat x,\xi)d\xi
\]
and if one does the same analysis as we did for $k_3$ in
(\ref{aniz}), we will get the sum of two integrals: one over
positive $\xi$ and the other one over negative $\xi$. Each will have
the same large logarithmic leading term but they will come with
different signs now and so will cancel each other in the sum leaving
us with the  uniformly bounded error terms.

Thus (\ref{e5}) can be written as
\[
-\log\epsilon (\phi'\phi-\widehat x)+E
\]
where
\[
E=(k_2+\log\epsilon)\phi'\phi+(k_3-\log \epsilon)\widehat
x+k_1\widehat x\phi'(\widehat x)+k_4\phi(\widehat x)
\]
Going back to (\ref{back}) and choosing $C$ appropriately ($C<0$),
one wants to make the following choice for $\epsilon$ and $\phi$:
\[
\epsilon'=\epsilon \delta \log \epsilon,\quad
x-\phi'\phi=-\delta(\phi-x\phi'), \quad \delta\in (0,1)
\]
Take for $\epsilon(t)$ one particular solution
\[
\epsilon(t)=\exp(-e^{\delta t})
\] The equation for $\phi$ we had before (see (\ref{oldstory})) and so we have
\[
|\phi-x|^A|\phi+x|^B=1, \quad A=(1+\delta)/2,\quad
B=(1-\delta)/2
\]
and its solution $\phi$ trivially satisfies assumptions (a), (b),
(c) mentioned above.

For the original equation (\ref{for1}), the error one gets after
substituting $\epsilon\phi(x/\epsilon)$ amounts to $\epsilon E$
where
\[
\epsilon E\lesssim \epsilon\quad {\rm if}\quad |x|\lesssim \epsilon
\]
and
\[
|\epsilon E|\lesssim \epsilon+\epsilon |(\log
|x|-\log\epsilon)(\phi'\phi-\widehat x)|+ |x\phi'(\widehat x)
k_1|+|\epsilon\phi(\widehat x) k_4| \quad {\rm if} \quad |x|\gtrsim
\epsilon
\]
Therefore, for $|x|\gtrsim \epsilon$,
\[
|\epsilon E|\lesssim |z|+\epsilon
\left|\frac{x}{\epsilon}\right|^{-\gamma(\delta)} \log
\left|\frac{x}{\epsilon}\right|, \quad \gamma(\delta)>0
\]
and $z=(x,\epsilon\phi(\widehat x))$. Thus, we see that the error is
small again so it
 is possible to find the approximate solution with the self-similar scaling that holds
 on the ball of size $\sim 1$. We are not trying to
 make this picture global and define the incompressible strain on the whole $\mathbb{R}^2$
 which corresponds to the error $\epsilon E$ but we believe it is possible.

\section{Acknowledgment}
This research was supported by NSF grant DMS-1067413. The hospitality of the Institute
 for Advanced Study at
Princeton, NJ is gratefully acknowledged. The author thanks
A.~Kiselev, F.~Nazarov, and A.~Zlatos for the constant interest in
this work and A. Mancho for interesting comments on her preprint
\cite{mancho}.

\bigskip

\end{document}